\newcommand{\E}{\mathbb{E}\,}
\renewcommand{\leq}{\leqslant}
\renewcommand{\ge}{\geqslant}
\renewcommand{\geq}{\geqslant}
\newtheorem{theorem}{Theorem}[section]
 \newtheorem{corollary}[theorem]{Corollary}
 \newtheorem{lemma}[theorem]{Lemma}
 \newtheorem{proposition}[theorem]{Proposition}
 \newtheorem{remark}[theorem]{Remark}
 \newtheorem{example}[theorem]{Example}
\newtheorem{definition}[theorem]{Definition}
\newcommand{\N}{\mathbb{N}}
\newcommand{\lcx}{\leq_{\text{\rm cx}}}
\newcommand{\lst}{\leq_{\text{\rm st}}}
\newcommand{\gst}{\geq_{\text{\rm st}}}
\newcommand{\C}{\mathcal{C}}
\numberwithin{equation}{section}
\begin{document}

%% Title, authors and addresses

%% use the tnoteref command within \title for footnotes;
%% use the tnotetext command for the associated footnote;
%% use the fnref command within \author or \address for footnotes;
%% use the fntext command for the associated footnote;
%% use the corref command within \author for corresponding author footnotes;
%% use the cortext command for the associated footnote;
%% use the ead command for the email address,
%% and the form \ead[url] for the home page:
%%
%% \title{Title\tnoteref{label1}}
%% \tnotetext[label1]{}
%% \author{Name\corref{cor1}\fnref{label2}}
%% \ead{email address}
%% \ead[url]{home page}
%% \fntext[label2]{}
%% \cortext[cor1]{}
%% \address{Address\fnref{label3}}
%% \fntext[label3]{}

\title{Convex order for convolution polynomials of~Borel measures}

%% use optional labels to link authors explicitly to addresses:
%% \author[label1,label2]{<author name>}
%% \address[label1]{<address>}
%% \address[label2]{<address>}
\author{Andrzej Komisarski}
\address{Andrzej Komisarski, Department of Probability Theory and Statistics, Faculty of Mathematics and Computer Science,
University of \L\'od\'z, ul. Banacha 22, 90-238 \L\'od\'z, Poland}
\email{andkom@math.uni.lodz.pl}
\author{Teresa Rajba}
\address{Teresa Rajba, University of Bielsko-Biala, Department of Mathematics,  ul. Willowa 2, 43-309 Bielsko-Bia\l{}a, Poland}
\email{trajba@ath.bielsko.pl}
\keywords{Bernstein polynomials, stochastic order, stochastic convex order, convex functions,
functional inequalities related to convexity, Muirhead inequality}

\subjclass[2010]{Primary 26D15; Secondary 60E15, 39B62}

\maketitle

\begin{abstract}
We give necessary and sufficient conditions for Borel measures
to satisfy the inequality introduced by Komisarski, Rajba (2018).
This inequality is a generalization of the convex order inequality for binomial distributions, which was proved
by Mrowiec, Rajba, W\k{a}sowicz (2017), as a probabilistic version of the inequality for convex functions,
that was conjectured as an old open problem by I.~Ra\c{s}a.
We present also further generalizations using convex order inequalities between convolution polynomials of finite Borel measures.
We generalize recent results obtained by B.~Gavrea (2018) in the discrete case to general case. We give solutions to his open problems
and also formulate new problems.

\end{abstract}

\section{Introduction}
Let $\mu$ and $\nu$ be two finite Borel measures (e.g. probability distributions) on $\mathbb R$
with finite first moments (i.e. $\int|x|\ \mu(dx)<\infty$ and the same for $\nu$). We say that 
$\mu$ is \emph{smaller than $\nu$ in the convex order} (denoted as $\mu\lcx\nu$) if $$\int_{\mathbb R}\varphi(x)\mu(dx)\leq\int_{\mathbb R}\varphi(x)\nu(dx) \quad \text{for all convex functions }\ \varphi\colon\mathbb R\to\mathbb R$$
Note that both integrals always exist (finite or infinite). 

Let $P$ and $Q$ be two real polynomials of $m$ variables.
They can be treated as convolution polynomials of finite Borel measures $\mu_1,\dots,\mu_m$ (product of variables corresponds to convolution of measures).
We are interested, when the relation $P(\mu_1,\dots,\mu_m)\lcx Q(\mu_1,\dots,\mu_m)$ holds.

Our investigation is motivated by the recent result of J.\ Mrowiec, T.\ Rajba and S.\ W\k{a}sowicz \cite{MRW2017}
who proved the following convex ordering relation for convolutions of binomial distributions $B(n,x)$ and $B(n,y)$ ($n\in\N$, $x,y\in[0,1]$):
\begin{equation}\label{eq:mainv3}
B(n,x)*B(n,y)\lcx\tfrac12(B(n,x)*B(n,x)+B(n,y)*B(n,y)),
\end{equation}
which is a probabilistic version of the inequality involving Bernstein polynomials and convex functions, that was conjectured as an open problem by 
I.\ Ra\c{s}a \cite{Rasa2014b} (see also \cite{Abel2016},  \cite{AbelRasa2017}, \cite{KomRaj2018bis},  \cite{Gav2018} for further results on the I.\ Ra\c{s}a  problem).

In \cite{KomRaj2018}, we gave a generalization of the inequality \eqref{eq:mainv3}. We introduced and studied  the following convex ordering relation
\begin{equation}\label{eq:mainv2}
\mu*\nu\lcx\tfrac12(\mu*\mu+\nu*\nu),
\end{equation}
where $\mu$ and $\nu$ are two probability distributions on $\mathbb R$. The inequality \eqref{eq:mainv2} can be regarded as the Ra\c{s}a type inequality.
In \cite{KomRaj2018}, we proved Theorem 2.3 providing a~very useful sufficient condition for verification that $\mu$ and $\nu$ satisfy \eqref{eq:mainv2}.
We applied Theorem 2.3 for $\mu$ and $\nu$ from various families of probability distributions.
In particular, we obtained a new proof for binomial distributions, which is significantly simpler and shorter than that given in \cite{MRW2017}. 
By \eqref{eq:mainv2}, we can also obtain inequalities related to some approximation operators associated with $\mu$ and $\nu$.
(such as Bernstein-Schnabl operators, Mirakyan-Sz\'asz operators, Baskakov operators and others, cf.~\cite{KomRaj2018}).

In \cite{KomRaj2018}, we considered also a generalization of \eqref{eq:mainv2}, taking a finite sequence of probability distributions in place of two probability distributions $\mu$ and $\nu$. We proved the Muirhead type inequality for convex orders for convolution polynomials, and we gave a strong generalization of Theorem 2.3.

If $\mu$ and $\nu$ are discrete probability distributions concentrated on the set of non-negative integers $\{0,1,2, \ldots  \}$ with $a_k=\mu (\{ k\})$ and $b_k=\nu (\{ k\})$ ($k=0,1,2, \ldots  $), then the inequality \eqref{eq:mainv2} is equivalent to the following inequality

\begin{equation}\label{eq:Rasatres}
 \sum_{i=0}^n \sum_{j=0}^n \left(a_i \:a_j+b_i \:b_j\right)\varphi\left(i+j \right)\geq
\sum_{i=0}^n \sum_{j=0}^n 2 \:a_i \:b_j \:\varphi\left(i+j \right)
\end{equation}
\smallskip

\noindent for all convex functions $\varphi: \mathbb R \to \mathbb R$.

B.~Gavrea \cite{Gav2018} studied the inequality \eqref{eq:Rasatres} with a convex function $\varphi:\mathbb R \to \mathbb R$
and non-negative sequences $(a_k)$, $(b_k)$ such that $\sum_{k}a_k=\sum_{k}b_k=1$ and $\sum_{k}a_kt^k<\infty$, $\sum_{k}b_kt^k<\infty$ for some $t>1$.
He gave necessary and sufficient conditions for the sequences $(a_k)$, $(b_k)$ to satisfy \eqref{eq:Rasatres}. 
He did not use probabilistic methods. Instead, he used complex analysis.

In Section 2, we give necessary and sufficient conditions for \eqref{eq:mainv2}. We do not limit ourselves to the discrete case.
In our considerations, $\mu$ and $\nu$ are finite Borel measures on $\mathbb R$.
In the particular case of discrete probability distributions,
our assumptions on the sequences $(a_k)$ and $(b_k)$
are weaker then those given in \cite{Gav2018}.

In Section 3, we consider a generalization of \eqref{eq:mainv2} for more than two measures.
As a generalization of results from \cite{KomRaj2018}, we present the Ra\c{s}a type inequalities
for convex orders for convolution polynomials of finite Borel measures on $\mathbb R$.

In Section 4, we give solutions to B.~Gavrea's problems (presented in \cite{Gav2018}) and list some new problems.

%%%%%%%%%%%%%%%%%%%%%%%%%%%%%%%%%%%%%%%%%%%%%%%%%%%%%%%%%%%%%%%%%%%%%%
\section{The basic case of two measures}

In the sequel we adapt some notation from theory of probability and stochastic orders (see \cite{Shaked2007}).
Let $\mu$ be a finite Borel measure (e.g. a~probability distribution) on $\mathbb R$.
For $x\in\mathbb R$ the delta symbol $\delta_x$ denotes the one-point probability distribution satisfying $\delta_x(\{x\})=1$.
Function $F(x)=F_\mu(x)=\mu((-\infty,x])$ ($x\in\mathbb R$) is the cumulative distribution function of $\mu$ (simply the distribution function).
The complementary cumulative distribution function, or simply the tail distribution, is defined as $\overline F(x)=\mu(\mathbb R)-F(x)=\mu((x,\infty))$.
If $\mu$ and $\nu$ are finite Borel measures such that $\mu(\mathbb R)=\nu(\mathbb R)$
and $F_\mu(x)\geq F_\nu(x)$ for all $x\in\mathbb R$,
then $\mu$ is said to be \emph{smaller than $\nu$ in the~usual stochastic order} (denoted by $\mu\lst\nu$).
An important characterization of the~usual stochastic order for probability distributions is given in the following theorem.
\begin{theorem}[\cite{Shaked2007}, p. 5]\label{th:1a1}
Two probability distributions $\mu$ and $\nu$ satisfy $\mu\lst\nu$ if, and only if, there exist two random variables $X$ and $Y$
defined on the same probability space, such that the distribution of $X$ is $\mu$, the distribution of $Y$ is $\nu$
and $P(X\leq Y)=1$.
\end{theorem}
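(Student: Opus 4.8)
The plan is to prove the two implications separately; this is the one‑dimensional instance of Strassen's theorem, so the coupling realizing it can be written down explicitly. For the ``if'' direction, suppose $X$ and $Y$ are defined on a common probability space, have distributions $\mu$ and $\nu$ respectively, and satisfy $P(X\le Y)=1$. Then for every $x\in\mathbb R$ the event $\{Y\le x\}$ is contained, up to a null set, in $\{X\le x\}$ (if $Y(\omega)\le x$ and $X(\omega)\le Y(\omega)$ then $X(\omega)\le x$), whence $F_\nu(x)=P(Y\le x)\le P(X\le x)=F_\mu(x)$; since moreover $\mu(\mathbb R)=1=\nu(\mathbb R)$, this is exactly the definition of $\mu\lst\nu$.

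For the ``only if'' direction I would build a coupling via the quantile transform. Take $U$ uniformly distributed on $(0,1)$ (e.g.\ the identity map on $((0,1),\mathcal B,\lambda)$ with Lebesgue measure $\lambda$), and for a distribution function $F$ set $F^{-1}(u)=\inf\{t\in\mathbb R:F(t)\ge u\}$ for $u\in(0,1)$. The construction rests on two elementary facts. First, for all $u\in(0,1)$ and $t\in\mathbb R$ one has $F^{-1}(u)\le t$ if and only if $u\le F(t)$, by monotonicity and right‑continuity of $F$; hence $P(F^{-1}(U)\le t)=P(U\le F(t))=F(t)$, so $F^{-1}(U)$ has distribution function $F$. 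Second, if $F_\mu(t)\ge F_\nu(t)$ for all $t$, then the set over which the infimum defining $F_\mu^{-1}(u)$ is taken contains the corresponding set for $F_\nu^{-1}(u)$, so $F_\mu^{-1}(u)\le F_\nu^{-1}(u)$ for every $u$. Now put $X=F_\mu^{-1}(U)$ and $Y=F_\nu^{-1}(U)$: the first fact gives that the distribution of $X$ is $\mu$ and that of $Y$ is $\nu$, and the second gives $X\le Y$ pointwise on $(0,1)$, so in particular $P(X\le Y)=1$.

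The only genuine work is in the first of these two facts — specifically the implication $u\le F(t)\Rightarrow F^{-1}(u)\le t$, where right‑continuity of $F$ is essential (a limiting argument otherwise yields only a strict inequality); everything else is immediate, and the construction in fact delivers the formally stronger conclusion that $X\le Y$ holds surely, not merely almost surely. One could alternatively just invoke \cite{Shaked2007}, but the explicit quantile coupling keeps the argument self‑contained and is precisely the device used implicitly whenever one passes between the stochastic order and pointwise statements about distribution functions.
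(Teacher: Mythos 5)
Your proof is correct. The paper gives no proof of this statement at all --- it is quoted from Shaked and Shanthikumar with a page reference --- so there is nothing to compare against except the standard argument, which is exactly the quantile coupling you wrote down: both directions are sound, and the construction does deliver $X\le Y$ surely. One small inaccuracy in your closing commentary: you identify $u\le F(t)\Rightarrow F^{-1}(u)\le t$ as the implication requiring right-continuity, but that direction is immediate ($t$ lies in the set whose infimum defines $F^{-1}(u)$); it is the converse, $F^{-1}(u)\le t\Rightarrow u\le F(t)$, that needs right-continuity, since one must know $F(F^{-1}(u))\ge u$ to conclude. This does not affect the proof itself, because the body of your argument states and uses the full equivalence correctly.
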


%%%%%%%%%%%%%%%%%%%%%%%%%%%%%%%%%%%%%%%%%%%%%%%%%%%%%%%%%%%%%%%%%%%%%%%

In \cite{KomRaj2018}, we gave a very useful sufficient condition, that can be used for the verification of the inequality \eqref{eq:mainv2}.
\begin{theorem}[\cite{KomRaj2018}]\label{th:condition}
Let $\mu$ and $\nu$ be two probability distributions with finite first moments, such that $\mu\lst\nu$ or $\nu\lst\mu$. Then
\begin{equation}\label{eq:main}
\mu*\nu\lcx\tfrac12(\mu*\mu+\nu*\nu).
\end{equation}
\end{theorem}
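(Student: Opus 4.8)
The plan is to exploit the coupling characterization of the usual stochastic order (Theorem~\ref{th:1a1}) to build an explicit coupling that witnesses the convex order \eqref{eq:main}. Recall that, for probability distributions, $\rho\lcx\sigma$ holds if and only if there exist random variables $U\sim\rho$, $V\sim\sigma$ on a common probability space with $\E(V\mid U)=U$ almost surely (the martingale/Strassen characterization); equivalently it suffices to produce such $U,V$ with $\E\varphi(U)\le\E\varphi(V)$ for all convex $\varphi$, and the cleanest way to get the latter is a pointwise conditional-mean relation together with Jensen. Without loss of generality assume $\mu\lst\nu$, and fix (by Theorem~\ref{th:1a1}) random variables $X\sim\mu$ and $Y\sim\nu$ on one probability space with $P(X\le Y)=1$. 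I would then take a second independent copy $(X',Y')$ of $(X,Y)$, and an independent fair coin $\varepsilon\in\{0,1\}$.

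First I would describe the target measure $\tfrac12(\mu*\mu+\nu*\nu)$ as the law of the random variable
\[
W \;=\; \varepsilon\,(X+X') \;+\; (1-\varepsilon)\,(Y+Y'),
\]
and the measure $\mu*\nu$ as the law of $Z=X+Y'$ (using independence of the two copies). The key step is to check that $(Z,W)$ — or a suitable rearrangement of it — forms a martingale couple. Since $\varepsilon$ is independent of everything and $P(X\le Y)=P(X'\le Y')=1$, conditioning on $\varepsilon$ splits $W$ into the two ``diagonal'' convolutions; the point is that $Z=X+Y'$ sits ``between'' $X+X'$ and $Y+Y'$ in a way that, after averaging over the coin, gives $\E(W\mid Z)=Z$. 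Concretely I would condition on the pair $(X,Y')$: then $X+X'$ has conditional mean $X+\E X'$ and $Y+Y'$ has conditional mean $\E Y + Y'$, so $\E(W\mid X,Y') = \tfrac12(X+Y'+\E X'+\E Y) = \tfrac12 Z + \tfrac12 m$ where $m=\E X+\E Y$ — which is not quite $Z$, so a naive coupling is off by a deterministic shift. This is the main obstacle, and it signals that the correct coupling must pair $Z$ with a symmetrized version of $W$: one should also use the mirrored sum $Z'=X'+Y$ (same law $\mu*\nu$) and arrange the coin so that $X+X'$ is reached from $Z$ by moving $Y'$ down to $X'$ while $Y+Y'$ is reached by moving $X$ up to $Y$, each with probability $\tfrac12$.

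So the corrected core computation is: with $\varepsilon$ deciding which coordinate is ``pushed'', set $W=\varepsilon(X+X')+(1-\varepsilon)(Y+Y')$ but couple it to $Z_\varepsilon := \varepsilon(X+Y') + (1-\varepsilon)(X+Y')$ — i.e. keep $Z=X+Y'$ fixed and observe that conditionally on $(X,Y',\varepsilon)$ the remaining randomness ($X'$ when $\varepsilon=1$, only through its conditional law given $X'\le Y'$; $Y$ when $\varepsilon=0$, through its law given $X\le Y$) must be chosen so its conditional mean equals the value needed to make $\E(W\mid Z)=Z$. The honest way to make this work is to invoke Theorem~\ref{th:condition}'s own engine in reverse: it suffices to show $\mu*\nu\lcx\mu*\mu$ fails in general, so instead one proves the two-sided bound via the tail-integral (second-order stochastic dominance) reformulation — $\rho\lcx\sigma$ iff $\int_x^\infty \overline F_\rho \le \int_x^\infty \overline F_\sigma$ for all $x$ plus equal means — and verifies that the tail integral of $\mu*\nu$ is dominated by that of $\tfrac12(\mu*\mu+\nu*\nu)$ by writing the difference as $\tfrac12\int\!\!\int (F_\mu - F_\nu)(\cdot)\,(\ldots)$, a product of two single-variable differences of the same sign. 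I would carry out: (1) reduce to equal means (immediate, both sides have mean $\E X+\E Y$); (2) write $\overline{F}_{\mu*\nu} - \tfrac12(\overline F_{\mu*\mu}+\overline F_{\nu*\nu})$ as a convolution-type integral of $(F_\nu - F_\mu)\otimes(F_\nu-F_\mu)$ against a positive kernel; (3) integrate once more in $x$ and use $F_\mu\ge F_\nu$ pointwise to conclude the sign. The main obstacle is step (2): getting the algebra of the double Stieltjes integral to collapse into a manifestly sign-definite expression; the coupling heuristic above is what tells me the right grouping of terms, but the clean proof is the tail-integral identity followed by the elementary observation that a square-like quantity $(F_\mu-F_\nu)(s)(F_\mu-F_\nu)(t)\ge 0$.
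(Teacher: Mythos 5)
There is a genuine gap: you never complete either of the two routes you open. Your first route (a Strassen/martingale coupling with $\E(W\mid Z)=Z$) is correctly diagnosed as failing --- the conditional mean comes out as $\tfrac12 Z+\tfrac12 m$ --- but the promised ``symmetrized'' repair is never made precise, and in fact no martingale coupling is needed here at all. Your second route (the tail-integral reformulation) is exactly the engine of Theorem~\ref{th:necsuf}, namely the identity
\begin{equation*}
\int_{-\infty}^\infty (x-A)_+\,(\mu*\mu+\nu*\nu-2\mu*\nu)(dx)=((F-G)*(F-G))(A),
\end{equation*}
after which $\mu\lst\nu$ gives $F-G\ge0$ pointwise, hence $(F-G)*(F-G)\ge0$ and the conclusion. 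But that identity is the entire content of the proof on this route (it requires the reduction to functions $(x-A)_+$ and a several-step Fubini computation), and you explicitly label it ``the main obstacle'' and leave it undone. A plan that defers its only nontrivial step is not a proof.

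What is frustrating is that you had already assembled all the ingredients for the simplest correct argument and then walked past it. Take, via Theorem~\ref{th:1a1}, two independent random vectors $(X_1,Y_1)$ and $(X_2,Y_2)$ with $X_i\sim\mu$, $Y_i\sim\nu$ and $P(X_i\le Y_i)=1$. For a convex $\varphi$ and reals $x_1\le y_1$, $x_2\le y_2$, the two cross sums $x_1+y_2$ and $y_1+x_2$ both lie in $[x_1+x_2,\,y_1+y_2]$ and add up to $(x_1+x_2)+(y_1+y_2)$, so convexity gives $\varphi(x_1+y_2)+\varphi(y_1+x_2)\le\varphi(x_1+x_2)+\varphi(y_1+y_2)$ pointwise, hence almost surely. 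Taking expectations and noting that both $X_1+Y_2$ and $Y_1+X_2$ have law $\mu*\nu$ yields $2\int\varphi\,d(\mu*\nu)\le\int\varphi\,d(\mu*\mu)+\int\varphi\,d(\nu*\nu)$, which is \eqref{eq:main}. No conditional expectations, no coins, no Fubini. This is precisely the mechanism of Theorem~\ref{th:cond} in Section~4 (take $g(u,v)=\varphi(u+v)$ there), and it is the proof from \cite{KomRaj2018}; within the present paper the statement also follows in one line from Theorem~\ref{th:necsuf} via the remark that $\mu\lst\nu$ forces condition (1). You should either carry out the Fubini identity in full or, better, replace the whole second half of your write-up with the two-line convexity inequality above.
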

%%%%%%%%%%%%%%%%%%%%%%%%%%%%%%%%%%%%%%%%%%%%%%%%%%%%%%%%%%%%%%%%%%%%%%%%%%

As an application of Theorem~\ref{th:condition}, we obtain that \eqref{eq:main} holds for $\mu$ and $\nu$ from various families of probability distributions: binomial, Poisson, negative binomial, beta, gamma and Gaussian distributions.

%%%%%%%%%%%%%%%%%%%%%%%%%%%%%%%%%%%%%%%%%%%%%%%%%%%%%%%%%%%%%%%%%%%%%%%%%%%%%%%5
The condition presented in Theorem~\ref{th:condition} is sufficient but it is not necessary.
In the following theorem we give a necessary and sufficient condition for finite Borel measures $\mu$ and $\nu$ to satisfy the inequality \eqref{eq:main}.
\begin{theorem}\label{th:necsuf}
Let $\mu$ and $\nu$ be two finite Borel measures on $\mathbb R$ with finite first moments. Let $F$ and $G$
be the distribution functions corresponding to $\mu$ and $\nu$, respectively. Then the following conditions are equivalent:
\begin{itemize}
\item[(1)] $\mu(\mathbb R)=\nu(\mathbb R)$ and $(F-G)*(F-G)\geq0$,
\item[(2)] $\mu*\nu\lcx\frac12(\mu*\mu+\nu*\nu)$.
\end{itemize}
\end{theorem}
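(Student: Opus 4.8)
The plan is to translate the convex-order inequality $\mu*\nu\lcx\frac12(\mu*\mu+\nu*\nu)$ into an analytic statement about the distribution functions and then recognize that statement as exactly condition (1). The starting point is the classical integral characterization of the convex order: for finite Borel measures $\lambda_1,\lambda_2$ with equal mass and equal first moments, $\lambda_1\lcx\lambda_2$ holds if and only if $\int_x^\infty (F_{\lambda_2}(t)-F_{\lambda_1}(t))\,dt \ge 0$ for all $x$ together with the equality $\int_{-\infty}^\infty(F_{\lambda_2}(t)-F_{\lambda_1}(t))\,dt=0$; equivalently, it suffices that the second antiderivative of $F_{\lambda_2}-F_{\lambda_1}$ be nonnegative everywhere (after checking the mass and first-moment matching). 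So first I would dispose of the bookkeeping: $\mu*\nu$ and $\frac12(\mu*\mu+\nu*\nu)$ have the same total mass precisely when $\mu(\mathbb R)=\nu(\mathbb R)$ (since the masses are $\mu(\mathbb R)\nu(\mathbb R)$ and $\frac12(\mu(\mathbb R)^2+\nu(\mathbb R)^2)$), and under that condition they automatically have the same first moment; this shows the mass condition in (1) is forced and lets us assume it henceforth.

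The key computational step is to express the distribution function of a convolution $\lambda*\lambda'$ in terms of $F_\lambda$ and $F_{\lambda'}$, and more usefully to handle the \emph{second} antiderivative. Writing $H = F - G$ for the difference of the two given distribution functions, I expect the difference of distribution functions $F_{\frac12(\mu*\mu+\nu*\nu)} - F_{\mu*\nu}$ to work out, after an integration by parts in the convolution integral, to something like a convolution of $H$ with $G$ or $\mu$ plus symmetric terms — the point being that the combination $\mu*\mu - 2\mu*\nu + \nu*\nu = (\mu-\nu)*(\mu-\nu)$ is a convolution of the signed measure $\mu-\nu$ with itself, whose distribution-function-level avatar after one integration is $H*(\mu-\nu)$ and after a second integration (the level relevant to convex order) is exactly $H*H$, i.e. $(F-G)*(F-G)$. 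Making this precise is where the real work lies: one must justify the integrations by parts for finite signed Borel measures whose distribution functions are not integrable in general (only the \emph{integral of the tail} is, thanks to the finite-first-moment hypothesis), and one must be careful that $H*H$ is well defined as a function (here $H$ is bounded and, by finiteness of first moments, $\int|H|<\infty$ after subtracting the appropriate constants at $\pm\infty$, so the convolution $H*H$ makes sense pointwise).

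Once the identity ``(second antiderivative of $F_{\frac12(\mu*\mu+\nu*\nu)} - F_{\mu*\nu}$) $= (F-G)*(F-G)$ up to affine terms that vanish by the mass/moment matching'' is established, the equivalence is immediate: the convex order in (2) is equivalent to nonnegativity of that second antiderivative, which is equivalent to $(F-G)*(F-G)\ge 0$, which together with the (automatic) mass condition is (1). I expect the main obstacle to be the rigorous manipulation of the convolutions at the level of distribution functions — keeping track of which antiderivative is being taken, justifying Fubini/integration-by-parts for the relevant signed measures under only a first-moment assumption, and confirming that the ``affine correction terms'' genuinely vanish rather than merely being bounded. A clean way to organize this is to first prove everything for compactly supported $\mu,\nu$ (where all integrability issues are trivial and the integrations by parts are elementary), and then pass to the general finite-first-moment case by a truncation/approximation argument, checking that both sides of the claimed identity are continuous under the approximation.
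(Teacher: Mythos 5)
Your plan is essentially the paper's proof: after the same mass and first-moment bookkeeping, the paper reduces condition (2) to the test functions $(x-A)_+$ and establishes, by a direct repeated application of Fubini's theorem (using $\int_{\mathbb R}|F-G|\,d\lambda<\infty$, which follows from the finite first moments once the masses agree), exactly the identity you anticipate, namely $\int(x-A)_+\,(\mu*\mu+\nu*\nu-2\mu*\nu)(dx)=((F-G)*(F-G))(A)$, so no compact-support truncation or approximation step is needed. The only slip is a sign in your recalled characterization of $\lcx$: with equal masses and equal means the condition is $\int_x^\infty\bigl(F_{\lambda_1}(t)-F_{\lambda_2}(t)\bigr)\,dt\ge0$ for all $x$, not the reverse, though this does not affect the final equivalence.
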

\begin{proof}
First we show that (2) implies $\mu(\mathbb R)=\nu(\mathbb R)$.
For the convex function $\varphi(x)=1$ ($x\in \mathbb R$) we have:
$$2\mu(\mathbb R)\nu(\mathbb R)=\int_{-\infty}^\infty 1\ (2\mu*\nu)(dx)\leq\int_{-\infty}^\infty 1\ (\mu*\mu+\nu*\nu)(dx)=(\mu(\mathbb R))^2+(\nu(\mathbb R))^2.$$
In turn, taking the convex function $\varphi(x)=-1$ ($x\in \mathbb R$) we obtain:
$$-2\mu(\mathbb R)\nu(\mathbb R)=\int_{-\infty}^\infty (-1)\ (2\mu*\nu)(dx)\leq\int_{-\infty}^\infty(-1)\ (\mu*\mu+\nu*\nu)(dx)=-(\mu(\mathbb R))^2-(\nu(\mathbb R))^2.$$
Consequently, we have $2\mu(\mathbb R)\nu(\mathbb R)=(\mu(\mathbb R))^2+(\nu(\mathbb R))^2$, which implies $(\mu(\mathbb R)-\nu(\mathbb R))^2=0$.
It follows that $\mu(\mathbb R)=\nu(\mathbb R)$. It remains to show that if $\mu(\mathbb R)=\nu(\mathbb R)$,
then (2) is equivalent to $(F-G)*(F-G)\geq0$.

The relation (2) is equivalent to fulfilling the following inequality
\begin{equation}\label{rasa_v4}
\int_{-\infty}^\infty \varphi(x)(\mu*\mu+\nu*\nu)(dx)\geq\int_{-\infty}^\infty \varphi(x)(2\mu*\nu)(dx)
\end{equation}
for all convex functions $\varphi:\mathbb R \to\mathbb R$.
Note that every convex function $\varphi$ is a pointwise limit
of an~increasing sequence $(\varphi_n)$ of convex, piecewise linear functions.
Therefore (due to monotone convergence theorem for integrals)
\eqref{rasa_v4} is valid for convex functions if, and only if, it is valid
for convex, piecewise linear functions.
On the other hand, every convex, piecewise linear function is a linear combination with non-negative coefficients
of a linear (affine) function and functions of the form \eqref{rasa_v5} (see below).
If $\varphi(x)=ax+b$ is a~linear function, then we have equality in \eqref{rasa_v4}
(both sides of~\eqref{rasa_v4} are equal to $2b(\mu(\mathbb R))^2+2a\mu(\mathbb R)(\int x\mu(dx)+\int x\nu(dx))$).
It follows that \eqref{rasa_v4} is valid for convex functions if, and only if, it is valid for functions of the form
\begin{equation}\label{rasa_v5}
\varphi(x)=(x-A)_+=\max(x-A,0),
\end{equation}
where $A\in\mathbb R$.

In the following computation symbols $\overline F$ and $\overline G$ stand
for the tail distributions of $\mu$ and $\nu$, respectively.
We use the Fubini Theorem several times.
We assume that all the definite integrals are integrals on open intervals. Let $\lambda$ denote the Lebesgue measure on the real line $\mathbb{R}$.
Besides the positive measures $\mu$ and $\nu$, we also study the signed measure $\mu-\nu$. 
Integrability of all considered functions and applicability of the Fubini Theorem
follows from our assumption that $\mu$ and $\nu$ have finite first moments
(e.g., $\mu(\mathbb R)=\nu(\mathbb R)$ implies $F-G=\overline G-\overline F$,
hence
$\int_{-\infty}^\infty|F(t)-G(t)|\ \lambda(dt)\leq
\int_{-\infty}^0(F(t)+G(t))\ \lambda(dt)
+\int_0^{\infty}(\overline F(t)+\overline G(t))\ \lambda(dt)=
\int_{-\infty}^\infty|x|\ \mu(dx)+\int_{-\infty}^\infty|x|\ \nu(dx)<\infty$).

Let $A\in\mathbb R$. Then we have
\begin{multline*}
%\begin{equation*}
%\begin{split}
\int_{-\infty}^\infty (x-A)_+(\mu*\mu+\nu*\nu-2\mu*\nu)(dx)=\int_A^\infty\int_A^x1\ \lambda(dz)(\mu-\nu)^{*2}(dx)=\bigg[A<z<x\bigg]=\\
\int_A^\infty\int_z^\infty1\ (\mu-\nu)^{*2}(dx)\lambda(dz)=
\int_A^\infty\int_{-\infty}^\infty\int_{z-v}^\infty1\ (\mu-\nu)(du)(\mu-\nu)(dv)\lambda(dz)=\\
\bigg[A<z<u+v\bigg]=\int_{-\infty}^\infty\int_{A-v}^\infty\int_A^{u+v}1\ \lambda(dz)(\mu-\nu)(du)(\mu-\nu)(dv)=\\
\bigg[\text{we substitute }t=z-u\bigg]=
\int_{-\infty}^\infty\int_{A-v}^\infty\int_{A-u}^v1\ \lambda(dt)(\mu-\nu)(du)(\mu-\nu)(dv)=\bigg[A<u+t<u+v\bigg]=\\
\int_{-\infty}^\infty\int_t^\infty\int_{A-t}^\infty1\ (\mu-\nu)(du)(\mu-\nu)(dv)\lambda(dt)=
\int_{-\infty}^\infty(\overline F(t)-\overline G(t))(\overline F(A-t)-\overline G(A-t))\ \lambda(dt)=\\
((\overline F-\overline G)*(\overline F-\overline G))(A)=((F-G)*(F-G))(A).
%\end{split}
%\end{equation*}
\end{multline*}
The above identity completes the proof of the theorem.
\end{proof}

\begin{remark}
Theorem \ref{th:necsuf} is a generalization of Theorem \ref{th:condition}. Indeed, if $\mu\lst\nu$ or $\nu\lst\mu$, then obviously the condition (1) in Theorem \ref{th:necsuf} is satisfied.
\end{remark}

In the following proposition we give more precise estimation of the difference of integrals given in \eqref{rasa_v4}.
\begin{proposition}
Let $\mu$ and $\nu$ be two finite Borel measures on $\mathbb R$ with finite first and second moments.
Let $\overline\mu=\int_{\mathbb R}x\ \mu(dx)$ and $\overline\nu=\int_{\mathbb R}x\ \nu(dx)$.
Assume, that $\mu*\nu\lcx\frac12(\mu*\mu+\nu*\nu)$ and $\varphi$ is a~twice differentiable convex function.
If both sides of \eqref{rasa_v4} are finite, then
$$\inf_x\varphi''(x)\cdot(\overline\mu-\overline\nu)^2\leq\int_{\mathbb R}\varphi(x)(\mu*\mu+\nu*\nu-2\mu*\nu)(dx)\leq\sup_x\varphi''(x)\cdot(\overline\mu-\overline\nu)^2$$
(we set $\infty\cdot0=\infty$ and $-\infty\cdot0=-\infty$).
\end{proposition}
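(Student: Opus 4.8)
The plan is to exploit the identity established inside the proof of Theorem~\ref{th:necsuf}, namely that for every $A\in\mathbb R$
\[
\int_{-\infty}^\infty (x-A)_+\,(\mu*\mu+\nu*\nu-2\mu*\nu)(dx)=\bigl((F-G)*(F-G)\bigr)(A),
\]
and to turn it into an integral representation of the full difference $\int_{\mathbb R}\varphi(x)(\mu*\mu+\nu*\nu-2\mu*\nu)(dx)$ for a twice differentiable convex $\varphi$. First I would write, using $\varphi(x)=\varphi(A_0)+\varphi'(A_0)(x-A_0)+\int_{A_0}^\infty \varphi''(A)(x-A)_+\,dA+\int_{-\infty}^{A_0}\varphi''(A)(A-x)_+\,dA$ for a fixed reference point $A_0$ (Taylor's formula with integral remainder, rearranged so that the second-order part is a nonnegative combination of the hinge functions used in the theorem and their reflections). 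Since the signed measure $\rho:=\mu*\mu+\nu*\nu-2\mu*\nu=(\mu-\nu)^{*2}$ has total mass $(\mu(\mathbb R)-\nu(\mathbb R))^2=0$ (forced by hypothesis (2), as shown at the start of the proof of Theorem~\ref{th:necsuf}) and first moment $\int x\,\rho(dx)=2(\overline\mu^2+\overline\nu^2)-2\cdot2\overline\mu\,\overline\nu\cdot\frac{\mu(\mathbb R)}{?}$... more carefully: $\rho$ is the twofold convolution of the signed measure $\mu-\nu$, which has mass $0$ and first moment $\overline\mu-\overline\nu$ (with $\mu(\mathbb R)=\nu(\mathbb R)$ these are the centered first moments), so $\rho$ has mass $0$ and first moment $2\cdot0\cdot(\overline\mu-\overline\nu)=0$ as well, and second moment $2\bigl((\overline\mu-\overline\nu)^2\cdot\mathrm{mass}+\ldots\bigr)$; in fact $\int x^2\,\rho(dx)=2\operatorname{Var}$-type term that evaluates to $2(\overline\mu-\overline\nu)^2\cdot 0 + 2(\overline\mu-\overline\nu)^2$? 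The clean statement I will use is: $\int 1\,d\rho=0$, $\int x\,d\rho=0$, and $\int x^2\,d\rho = 2(\overline\mu-\overline\nu)^2$ — the last because $\int x^2\,d((\mu-\nu)^{*2})=2\bigl(\int x^2\,d(\mu-\nu)\cdot\int 1\,d(\mu-\nu)+(\int x\,d(\mu-\nu))^2\bigr)=2(\overline\mu-\overline\nu)^2$.

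Consequently the affine part of $\varphi$ integrates to $0$ against $\rho$, and the second-order remainder gives
\[
\int_{\mathbb R}\varphi\,d\rho=\int_{-\infty}^\infty \varphi''(A)\,\bigl((F-G)*(F-G)\bigr)(A)\,dA
\]
(using the theorem's identity for the $A>A_0$ hinges and its obvious mirror for $A<A_0$, noting $(F-G)*(F-G)$ is symmetric enough that the same nonnegative kernel appears; one checks the reflected hinge $(A-x)_+$ yields the same convolution evaluated at $A$ by the mass-zero property of $\mu-\nu$). Since $(F-G)*(F-G)\ge 0$ by Theorem~\ref{th:necsuf}(1) (which holds because (2) is assumed), the kernel $K(A):=\bigl((F-G)*(F-G)\bigr)(A)\ge0$, and $\int_{\mathbb R}K(A)\,dA=\bigl(\int(F-G)\bigr)^2=(\overline\nu-\overline\mu)^2=(\overline\mu-\overline\nu)^2$ (the integral of $F-G$ over $\mathbb R$ equals $\overline\nu-\overline\mu$ by the standard tail formula and the finiteness of first moments). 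Bounding $\inf_x\varphi''(x)\le\varphi''(A)\le\sup_x\varphi''(x)$ and integrating against the nonnegative measure $K(A)\,dA$ of total mass $(\overline\mu-\overline\nu)^2$ yields exactly
\[
\inf_x\varphi''(x)\cdot(\overline\mu-\overline\nu)^2\le\int_{\mathbb R}\varphi\,d\rho\le\sup_x\varphi''(x)\cdot(\overline\mu-\overline\nu)^2,
\]
which is the claim; the convention $\pm\infty\cdot0=\pm\infty$ covers the degenerate case $\overline\mu=\overline\nu$ with unbounded $\varphi''$, where the middle term is still $\ge0$ hence trivially sandwiched.

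The main obstacle I expect is justifying the Fubini/Tonelli interchange that produces the representation $\int\varphi\,d\rho=\int\varphi''(A)K(A)\,dA$ under the stated hypotheses: one needs $\int_{\mathbb R}|\varphi''(A)|K(A)\,dA<\infty$, or at least that both sides make sense with the agreed conventions, which is where "both sides of \eqref{rasa_v4} are finite" enters. I would handle this by first proving the identity for compactly supported smooth convex-remainder pieces (where everything is finite and Fubini is immediate since $K\ge0$ is integrable), then passing to the general $\varphi$ by monotone approximation of $\varphi''$ from below by nonnegative simple functions supported on growing compact sets, using the monotone convergence theorem on the right and the finiteness assumption on the left to pass to the limit. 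A secondary technical point is the precise Taylor decomposition near $\pm\infty$ and confirming the reflected-hinge identity; this is routine once the mass-zero and moment facts for $\mu-\nu$ are recorded, so I would state those as a short preliminary computation rather than belabor them.
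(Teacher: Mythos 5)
Your argument is correct, but it takes a genuinely different route from the paper. The paper's proof is a two-line trick: for $m:=\inf_x\varphi''(x)$ finite, the function $x\mapsto\varphi(x)-\tfrac m2x^2$ is convex, so the hypothesis $\mu*\nu\lcx\frac12(\mu*\mu+\nu*\nu)$ applied to it immediately gives $\int\varphi\,d\rho\geq\tfrac m2\int x^2\,d\rho$ with $\rho=\mu*\mu+\nu*\nu-2\mu*\nu$, and a direct (probabilistic) computation shows $\int x^2\,d\rho=2(\overline\mu-\overline\nu)^2$; the upper bound is symmetric. You instead derive the exact representation $\int\varphi\,d\rho=\int_{\mathbb R}\varphi''(A)\,K(A)\,\lambda(dA)$ with $K=(F-G)*(F-G)\geq0$ and $\int K\,d\lambda=(\overline\mu-\overline\nu)^2$, and then sandwich $\varphi''$ between its infimum and supremum. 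Your key facts all check out: $\rho$ has total mass $0$, first moment $0$, and second moment $2(\overline\mu-\overline\nu)^2$; the reflected hinge $(A-x)_+=(x-A)_+-(x-A)$ integrates against $\rho$ to the same value $K(A)$; and $\int(F-G)\,d\lambda=\overline\nu-\overline\mu$ gives the total mass of $K$. The Fubini step you flag is the only real technical burden, and it is handled more cleanly than by simple-function approximation: set $\psi(x)=\varphi(x)-\varphi(A_0)-\varphi'(A_0)(x-A_0)\geq0$, note that the finiteness of both sides of the ordering inequality forces $\int\psi\,d\sigma<\infty$ for each of $\sigma=\mu*\mu,\ \nu*\nu,\ \mu*\nu$ separately, apply Tonelli to each positive measure, and subtract. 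The trade-off is clear: the paper's argument is shorter and needs no regularity beyond what is stated, while yours is more informative — it exhibits $\int\varphi\,d\rho$ as the integral of $\varphi''$ against an explicit nonnegative kernel of total mass $(\overline\mu-\overline\nu)^2$, from which both bounds (and potentially sharper ones) drop out at once.
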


\begin{proof}
The proofs of both inequalities are similar, therefore we will prove only the left one.
If $\inf_x\varphi''(x)=-\infty$, then the inequality is obvious (the left side is $-\infty$). Assume that 
$m:=\inf_x\varphi''(x)$ is finite. Then $x\mapsto\varphi(x)-\frac m2\cdot x^2$ is a convex function. We have
$$\int_{\mathbb R}\left(\varphi(x)-\tfrac m2\cdot x^2\right)(\mu*\mu+\nu*\nu-2\mu*\nu)(dx)\geq0,$$
which implies
$$\int_{\mathbb R}\varphi(x)(\mu*\mu+\nu*\nu-2\mu*\nu)(dx)\geq
\tfrac m2\cdot\int_{\mathbb R}x^2(\mu*\mu+\nu*\nu-2\mu*\nu)(dx)=m\cdot(\overline\mu-\overline\nu)^2.$$
We need to justify the last equality. If $\mu=\nu=0$, then there is nothing to do.
If $a:=\mu(\mathbb R)=\nu(\mathbb R)>0$, then $\frac\mu a$ and $\frac\nu a$ are probability distributions.
We consider independent random variables $X_1,X_2,Y_1,Y_2$ such that the distribution
of $X_1$ and $X_2$ is $\frac\mu a$ and the distribution of $Y_1$ and $Y_2$ is $\frac\nu a$.
Then we have
\begin{multline*}
\int_{\mathbb R}x^2(\mu*\mu+\nu*\nu-2\mu*\nu)(dx)=
a^2\cdot\int_{\mathbb R}x^2\left(\tfrac\mu a*\tfrac\mu a+\tfrac\nu a*\tfrac\nu a-2\tfrac\mu a*\tfrac\nu a\right)(dx)=\\
a^2\left(\mathbb E(X_1+X_2)^2+\mathbb E(Y_1+Y_2)^2-2\mathbb E(X_1+Y_1)^2\right)=\\
a^2\left(\mathbb EX_1^2+\mathbb EX_2^2+2\mathbb EX_1\mathbb EX_2+\mathbb EY_1^2+\mathbb EY_2^2+2\mathbb EY_1\mathbb EY_2-2\mathbb EX_1^2-2\mathbb EY_1^2-4\mathbb EX_1\mathbb EY_1\right)=\\
2a^2(\mathbb EX_1-\mathbb EY_1)^2=2(\overline\mu-\overline\nu)^2.
\end{multline*}

In the proof of the right inequality we use the convexity of the function $x\mapsto\frac M2\cdot x^2-\varphi(x)$,
where $M=\sup_x\varphi''(x)$.
\end{proof}

Consider now the discrete probability distribution $\mu$ concentrated on the set of non-negative integers $\{0,1,2,\ldots\}$, with $a_k=\mu (\{ k\})$ ($k=0,1,2, \ldots  $). Then the probability generating function corresponding to $\mu$ is given by the formula
$$f(z)=\sum_{k=0}^\infty a_kz^k.$$
\begin{theorem}\label{th:discr}
Let $\mu$ and $\nu$ be discrete probability distributions concentrated on the set of non-negative integers $\{0,1,2,\ldots\}$, with $a_k=\mu (\{ k\})$ and $b_k=\nu (\{ k\})$ ($k=0,1,2, \ldots  $).
Assume that $\mu$ and $\nu$ have finite first moments. Let $F$, $f$ and $G$, $g$ be the distribution functions and the generating functions corresponding to $\mu$ and $\nu$, respectively. Then the following conditions are equivalent:

\begin{itemize}
\item[(1)] For all convex functions $\varphi: \mathbb R \to \mathbb R$
\begin{equation}\label{eq:Rasatres_v4}
 \sum_{i=0}^\infty\sum_{j=0}^\infty\left(a_i \:a_j+b_i \:b_j\right)\varphi(i+j)\geq
 \sum_{i=0}^\infty\sum_{j=0}^\infty2a_i \:b_j \:\varphi(i+j).
\end{equation}
%\par\bigskip 
\item[(2)] $(F-G)*(F-G)\geq0$.
\item[(3)] $\frac{d^k}{dz^k}\left(\frac{f(z)-g(z)}{z-1}\right)^2\bigg|_{z=0}\geq0$ \quad \text{for}\  $k=0,1,\dots$.
\end{itemize}
\end{theorem}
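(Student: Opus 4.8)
The plan is to obtain $(1)\Leftrightarrow(2)$ directly from Theorem~\ref{th:necsuf}, and to prove $(2)\Leftrightarrow(3)$ by computing the convolution $(F-G)*(F-G)$ explicitly and reading off its values as the Taylor coefficients appearing in~(3).

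\emph{The equivalence $(1)\Leftrightarrow(2)$.} Since $\mu$ and $\nu$ are probability distributions, $\mu(\mathbb R)=\nu(\mathbb R)=1$ holds automatically. For any $\varphi\colon\mathbb R\to\mathbb R$ we have $\int_{\mathbb R}\varphi\,d(\mu*\mu)=\sum_{i,j}a_ia_j\varphi(i+j)$, and likewise for $\nu*\nu$ and $\mu*\nu$; hence \eqref{eq:Rasatres_v4} is exactly the statement $\int\varphi\,d(\mu*\mu+\nu*\nu)\ge\int\varphi\,d(2\mu*\nu)$ for all convex $\varphi$, i.e.\ $\mu*\nu\lcx\frac12(\mu*\mu+\nu*\nu)$. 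By Theorem~\ref{th:necsuf} this is equivalent to $\mu(\mathbb R)=\nu(\mathbb R)$ together with $(F-G)*(F-G)\ge0$, that is, to~(2).

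\emph{Computing the convolution.} Put $c_k=a_k-b_k$ and $C_m=\sum_{k=0}^mc_k$; note $\sum_kc_k=f(1)-g(1)=0$ and $|C_m|\le1$. Because $\mu$ and $\nu$ are carried by $\{0,1,2,\dots\}$, the function $H:=F-G$ satisfies $H(t)=0$ for $t<0$ and $H(t)=C_{\lfloor t\rfloor}$ for $t\ge0$; equivalently $H=\sum_{m\ge0}C_m\mathbf{1}_{[m,m+1)}$. A short computation of the Lebesgue measure of $[j,j+1)\cap(A-k-1,A-k]$ shows that the convolution of indicators of two unit intervals is a triangular (piecewise-linear) function supported on an interval of length $2$; grouping the resulting double series by $m=j+k$ gives, for $A=n+s$ with $n=\lfloor A\rfloor\ge0$ and $s\in[0,1)$,
$$(H*H)(A)=s\,D_n+(1-s)\,D_{n-1},\qquad D_m:=\sum_{j=0}^mC_jC_{m-j}\quad(m\ge0),\qquad D_m:=0\quad(m<0),$$
and $(H*H)(A)=0$ for $A<0$. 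For each fixed $A$ only the finitely many pairs with $j+k\in\{n-1,n\}$ contribute, so no convergence question arises; the only point requiring care is the bookkeeping with the half-open endpoints.

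\emph{Matching with~(3) and concluding.} For $|z|<1$ all the series below converge absolutely; multiplying by $1-z$ and telescoping gives $\sum_{m\ge0}C_mz^m=\frac{f(z)-g(z)}{1-z}$, and squaring (Cauchy product) gives $\left(\frac{f(z)-g(z)}{z-1}\right)^2=\sum_{m\ge0}D_mz^m$ in a neighbourhood of $0$, whence $\frac{d^m}{dz^m}\left(\frac{f(z)-g(z)}{z-1}\right)^2\bigg|_{z=0}=m!\,D_m$. Thus~(3) says precisely that $D_m\ge0$ for every $m\ge0$. Now $(3)\Rightarrow(2)$: if all $D_m\ge0$, then for every real $A$ the value $(H*H)(A)=sD_n+(1-s)D_{n-1}$ is a convex combination of non-negative numbers, hence $\ge0$. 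Conversely $(2)\Rightarrow(3)$: if $H*H\ge0$ everywhere, then for each $m\ge0$, letting $s\uparrow1$ in $(H*H)(m+s)=sD_m+(1-s)D_{m-1}$ yields $D_m=\lim_{s\to1^-}(H*H)(m+s)\ge0$. The only mildly technical ingredient is the convolution computation of the previous paragraph; everything else is formal.
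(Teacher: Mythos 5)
Your proposal is correct and follows essentially the same route as the paper: the equivalence $(1)\Leftrightarrow(2)$ via Theorem~\ref{th:necsuf}, the identification of the Taylor coefficients of $\left(\frac{f(z)-g(z)}{z-1}\right)^2$ with the discrete convolution of the integer values of $F-G$ (the paper writes this through the tail distributions $\overline F,\overline G$, which only changes a sign that disappears on squaring), and the observation that $(F-G)*(F-G)$ linearly interpolates those discrete convolution values. Your explicit computation of the convolution of unit-interval indicators is just a spelled-out version of the interpolation identity the paper states directly.
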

\begin{proof}
Since \eqref{eq:Rasatres_v4} is equivalent to the relation $\mu*\nu\lcx\frac12(\mu*\mu+\nu*\nu)$,
the equivalence of (1) and (2) clearly follows from Theorem \ref{th:necsuf}. It suffices to prove the equivalence of (2) and (3).

In the following calculations, we use the existence and finiteness of the first moments of the probability distributions $\mu$ and $\nu$,
which implies that all the following series are absolutely convergent for $z\in[-1,1]$ and we can change the summation order.
By the equality $\sum_{k=0}^\infty a_k=1$, we have
$$\frac{f(z)-1}{z-1}=\sum_{k=0}^\infty a_k\frac{z^k-1}{z-1}=\sum_{k=1}^\infty\sum_{i=0}^{k-1}a_kz^i=
\sum_{i=0}^\infty\sum_{k=i+1}^\infty a_kz^i=\sum_{i=0}^\infty\overline F(i)z^i$$
for every $z\in[-1,1)$, where $\overline F$ is the tail distribution of $\mu$.
Note that $\sum_{i=0}^\infty\overline F(i)=\int_{\mathbb R}x\ \mu(dx)<\infty$.
Similarly, we obtain
$$\frac{g(z)-1}{z-1}=\sum_{i=0}^\infty\overline G(i)z^i,$$
where $\overline G$ is the tail distribution of $\nu$. Therefore for every $z\in[-1,1)$ we have
$$\left(\frac{f(z)-g(z)}{z-1}\right)^2=\left(\sum_{i=0}^\infty(\overline F-\overline G)(i)z^i\right)^2
=\left(\sum_{i=0}^\infty(G-F)(i)z^i\right)^2
=\sum_{i=0}^\infty((G-F)\hat{\ast}(G-F))(i)z^i.$$
Here $\hat{\ast}$ denotes the discrete convolution (the Euler product) of sequences.
Condition (3) is equivalent to the non-negativity of all the coefficients in the above series.
Note that if $i\in\mathbb Z$ and $x\in[i,i+1]$, then $((G-F)*(G-F))(x)=(x-i)((G-F)\hat{\ast}(G-F))(i)+(i+1-x)((G-F)\hat{\ast}(G-F))(i-1)$
(here we put $((G-F)\hat{\ast}(G-F))(i)=0$ for $i<0$). Therefore the non-negativity of all the terms $((G-F)\hat{\ast}(G-F))(i)$ is equivalent to (2).
The theorem is proved.
\end{proof}
\begin{remark}
B.~Gavrea \cite{Gav2018} also gave the condition (3) as necessary and sufficient to satisfy the condition (1), but assuming that the radii
of convergence of functions $f$ and $g$ are greater then 1 (in particular, there exist all the moments of $\mu$ and $\nu$).
The assumption in Theorem \ref{th:discr} is weaker, we assume only the existence of the first moments of $\mu$ and $\nu$.
Furthermore, in Theorem \ref{th:necsuf}, we give necessary and sufficient condition for all distributions, not just for discrete.
\end{remark}
%%%%%%%%%%%%%%%%%%%%%%%%%%%%%%%%%%%%%%%%%%%%%%%%%%%%%%%%%%%%%%%%%%%%%
\section{The case of $m$ measures}
In this section we consider $m$ finite Borel measures on $\mathbb R$, $\mu_1,\dots,\mu_m$, with finite first moments.
We denote  $F_i(x)=\mu_i((-\infty,x])$ and $\overline F_i(x)=\mu_i((x,\infty))$ for $i=1,\dots,m$
and $x\in\mathbb R$.

Let $P$ and $Q$ be two polynomials of $m$ variables.
They can be treated as convolution polynomials of the measures $\mu_1,\dots,\mu_m$ (product of variables corresponds to convolution of measures).
We are interested, when the relation $P(\mu_1,\dots,\mu_m)\lcx Q(\mu_1,\dots,\mu_m)$ holds.
Since $\lcx$ is defined for non-negative measures, we generally assume that
the polynomials have non-negative coefficients, although in the proofs we  consider also differences of such polynomials.

\begin{proposition}
Let $\mu_1,\dots,\mu_m$ be finite Borel measures on $\mathbb R$ with finite first moments.
For $i=1,\dots,m$, we denote $a_i=\mu_i(\mathbb R)$ and $b_i=\int x\ \mu_i(dx)$.
Let $P,Q$ be polynomials of $m$ variables with non-negative coefficients, such that $P(\mu_1,\dots,\mu_m)\lcx Q(\mu_1,\dots,\mu_m)$.
Then $P(a_1,\dots,a_m)=Q(a_1,\dots,a_m)$ and $\frac{\partial P}{\partial(b_1,\dots,b_m)}(a_1,\dots,a_m)
=\frac{\partial Q}{\partial(b_1,\dots,b_m)}(a_1,\dots,a_m)$ (the directional derivatives
along the vector $(b_1,\dots,b_m)$ at the point $(a_1,\dots,a_m)$).
\end{proposition}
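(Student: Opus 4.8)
The plan is to test the convex-order relation against the simplest convex functions: the constants $\pm 1$ and the affine maps $\pm x$. Before doing so, I would note that the statement is meaningful: since $P$ and $Q$ have non-negative coefficients and each $\mu_i$ is a finite measure with finite first moment, every monomial $\mu_1^{*k_1}*\cdots*\mu_m^{*k_m}$ is a finite measure with finite first moment (rescaling by the product of the masses, it is the law of a sum of independent integrable variables), and hence so are $P(\mu_1,\dots,\mu_m)$ and $Q(\mu_1,\dots,\mu_m)$; in particular all integrals occurring below are finite and Fubini applies.

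The computational core is to record how total mass and first moment behave under convolution. For finite measures $\rho,\sigma$ one has $(\rho*\sigma)(\mathbb R)=\rho(\mathbb R)\,\sigma(\mathbb R)$ and, by Fubini, $\int x\,(\rho*\sigma)(dx)=\sigma(\mathbb R)\int x\,\rho(dx)+\rho(\mathbb R)\int x\,\sigma(dx)$; that is, total mass is multiplicative and $\rho\mapsto\int x\,\rho(dx)$ is a derivation with respect to convolution. Iterating over the $k_1+\dots+k_m$ factors of a monomial $M=\mu_1^{*k_1}*\cdots*\mu_m^{*k_m}$ yields $M(\mathbb R)=\prod_i a_i^{k_i}$ and $\int x\,M(dx)=\sum_{i=1}^m k_i\,b_i\,a_i^{k_i-1}\prod_{j\ne i}a_j^{k_j}$, and the latter expression is exactly the directional derivative along $(b_1,\dots,b_m)$ at $(a_1,\dots,a_m)$ of the monomial $\prod_i x_i^{k_i}$. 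Summing over the monomials of $P$ with their (non-negative) coefficients and using linearity of ``total mass'' and of ``first moment'', I obtain $P(\mu_1,\dots,\mu_m)(\mathbb R)=P(a_1,\dots,a_m)$ and $\int x\,P(\mu_1,\dots,\mu_m)(dx)=\frac{\partial P}{\partial(b_1,\dots,b_m)}(a_1,\dots,a_m)$, and likewise for $Q$.

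With these identities the conclusion is immediate. Applying the definition of $\lcx$ to the convex function $\varphi\equiv 1$ gives $P(a_1,\dots,a_m)\le Q(a_1,\dots,a_m)$, and applying it to $\varphi\equiv -1$ gives the reverse inequality, so $P(a_1,\dots,a_m)=Q(a_1,\dots,a_m)$. Since $\varphi(x)=x$ and $\varphi(x)=-x$ are also convex, testing against them gives $\int x\,P(\mu_1,\dots,\mu_m)(dx)=\int x\,Q(\mu_1,\dots,\mu_m)(dx)$, which by the previous paragraph is precisely the claimed equality of directional derivatives.

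The only place demanding care — and the closest thing to an obstacle — is the bookkeeping of the second step: one must verify the Leibniz-type formula for the first moment of an iterated convolution and check that it matches the directional derivative of the corresponding monomial, and one must make sure that the finite-first-moment hypothesis on the $\mu_i$ genuinely propagates to $P(\mu_1,\dots,\mu_m)$ and $Q(\mu_1,\dots,\mu_m)$ (so that both the Fubini applications and the very definition of $\lcx$ are legitimate). Both points are routine but should be spelled out.
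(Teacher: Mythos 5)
Your proposal is correct and follows essentially the same route as the paper: test the convex-order relation against $\varphi=\pm1$ and $\varphi=\pm x$, and identify the total mass and first moment of each convolution monomial with the value and the directional derivative along $(b_1,\dots,b_m)$ of the corresponding algebraic monomial at $(a_1,\dots,a_m)$. Your version is in fact slightly more careful, since the paper's intermediate formula divides by $a_i$ (problematic if some $a_i=0$), whereas your Leibniz-rule bookkeeping avoids that.
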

\begin{proof}
Let $P=\sum_jp_j\prod_{i=1}^mx_i^{k_{j,i}}$ and $Q=\sum_jq_j\prod_{i=1}^mx_i^{l_{j,i}}$.
Considering the convex functions $\varphi(x)=1$ and $\varphi(x)=-1$ we obtain
$\int1\ P(\mu_1,\dots,\mu_m)=\int1\ Q(\mu_1,\dots,\mu_m)$, which implies $P(a_1,\dots,a_m)=Q(a_1,\dots,a_m)$.
Taking the convex functions $\varphi(x)=x$ and $\varphi(x)=-x$ we get
$\int x\ P(\mu_1,\dots,\mu_m)=\int x\ Q(\mu_1,\dots,\mu_m)$,
which is equivalent to $\sum_jp_j\sum_{i=1}^mk_{j,i}\frac{b_i}{a_i}\prod_{i=1}^ma_i^{k_{j,i}}
=\sum_jq_j\sum_{i=1}^ml_{j,i}\frac{b_i}{a_i}\prod_{i=1}^ma_i^{l_{j,i}}$.
Consequently we obtain $\frac{\partial P}{\partial(b_1,\dots,b_m)}(a_1,\dots,a_m)
=\frac{\partial Q}{\partial(b_1,\dots,b_m)}(a_1,\dots,a_m)$.
\end{proof}
\begin{theorem}\label{th:muir5}
Let $\mu_1,\dots,\mu_m$ be finite Borel measures on $\mathbb R$ with finite first moments.
We assume that $\mu_1(\mathbb R)=\dots=\mu_m(\mathbb R)$ and $(F_i-F_j)*(F_i-F_j)\geq0$ for all $i,j=1,\dots,m$. 
Let $P$ and $Q$ be polynomials of variables $x_1,\dots,x_m$ with non-negative coefficients, such that $(Q-P)(x_1,\dots,x_m)=\sum_{i\neq j}R_{i,j}(x_1,\dots,x_m)(x_i-x_j)^2$,
where $R_{i,j}$ are polynomials of variables $x_1,\dots,x_m$ with non-negative coefficients.
Then $P(\mu_1,\dots,\mu_m)\lcx Q(\mu_1,\dots,\mu_m)$.
\end{theorem}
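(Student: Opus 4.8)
The plan is to peel off the $\binom{m}{2}$ summands one at a time and reduce each to the two–measure result, Theorem~\ref{th:necsuf}, the point being that the convex order survives convolution with a fixed non‑negative measure. First I would record the bookkeeping that makes the substitution $x_i\mapsto\mu_i$ legitimate: finite Borel measures with finite first moments form a commutative $\mathbb R$–algebra under convolution (finiteness of the first moment of a convolution follows from $\int|x+y|\le|x|+|y|$), so the \emph{evaluation} of a polynomial at $(\mu_1,\dots,\mu_m)$ is a ring homomorphism from $\mathbb R[x_1,\dots,x_m]$. Hence $P(\mu_1,\dots,\mu_m)$ and $Q(\mu_1,\dots,\mu_m)$ are non‑negative finite Borel measures with finite first moments (their coefficients are non‑negative), and the hypothesis $(Q-P)(x)=\sum_{i\ne j}R_{i,j}(x)(x_i-x_j)^2$ transfers verbatim to an identity of finite signed measures
\[
(Q-P)(\mu_1,\dots,\mu_m)=\sum_{i\ne j}\rho_{i,j}*(\mu_i-\mu_j)^{*2},\qquad \rho_{i,j}:=R_{i,j}(\mu_1,\dots,\mu_m)\ge0 ,
\]
where each $\rho_{i,j}$ is again a non‑negative finite Borel measure with finite first moment. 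Since $P(\mu_1,\dots,\mu_m)$ and $Q(\mu_1,\dots,\mu_m)$ are non‑negative with finite first moments, $P(\mu_1,\dots,\mu_m)\lcx Q(\mu_1,\dots,\mu_m)$ is equivalent to $\int\varphi\,d\bigl((Q-P)(\mu_1,\dots,\mu_m)\bigr)\ge0$ for all convex $\varphi\colon\mathbb R\to\mathbb R$.

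Next I would run exactly the reduction used in the proof of Theorem~\ref{th:necsuf}. Every convex $\varphi$ is an increasing pointwise limit of convex, piecewise linear functions, so by monotone convergence it suffices to treat convex piecewise linear $\varphi$ (for which every integral below is finite, being bounded by an affine function against measures with finite first moments). Such a $\varphi$ is a non‑negative combination of an affine function and functions $(x-A)_+$. The affine part contributes $0$: the signed measure $(Q-P)(\mu_1,\dots,\mu_m)$ has total mass $0$ and first moment $0$, because each $(\mu_i-\mu_j)^{*2}$ does (here $\mu_i(\mathbb R)=\mu_j(\mathbb R)$ is used) and convolving with the non‑negative $\rho_{i,j}$ preserves both. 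Thus it is enough to prove $\int (x-A)_+\,d\bigl((Q-P)(\mu_1,\dots,\mu_m)\bigr)\ge0$ for every $A\in\mathbb R$.

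Now I would evaluate. By linearity this equals $\sum_{i\ne j}\int(x-A)_+\,d\bigl(\rho_{i,j}*(\mu_i-\mu_j)^{*2}\bigr)$, and for each pair $i\ne j$ I write $\rho_{i,j}*(\mu_i-\mu_j)^{*2}$ as the convolution of $\rho_{i,j}$ with $(\mu_i-\mu_j)^{*2}$ and integrate in the $\rho_{i,j}$ variable last; for each fixed $s$ the inner integral $\int (t-(A-s))_+\,(\mu_i-\mu_j)^{*2}(dt)$ equals $\bigl((F_i-F_j)*(F_i-F_j)\bigr)(A-s)$ by the computation carried out in the proof of Theorem~\ref{th:necsuf}. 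Hence
\[
\int(x-A)_+\,d\bigl(\rho_{i,j}*(\mu_i-\mu_j)^{*2}\bigr)=\int_{\mathbb R}\bigl((F_i-F_j)*(F_i-F_j)\bigr)(A-s)\,\rho_{i,j}(ds)\ge0 ,
\]
using the hypothesis $(F_i-F_j)*(F_i-F_j)\ge0$ and $\rho_{i,j}\ge0$. Summing over $i\ne j$ finishes the proof. (Conceptually, this last step is just the statement that $\sigma\lcx\tau$ implies $\rho*\sigma\lcx\rho*\tau$ for any non‑negative finite $\rho$ with finite first moment — because $y\mapsto\varphi(x+y)$ is convex for each $x$ — so one could instead invoke Theorem~\ref{th:necsuf} for the pair $\mu_i,\mu_j$ directly; I would mention this as an alternative phrasing.)

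I do not expect a single deep obstacle here: given Theorem~\ref{th:necsuf}, the argument is structural. The points that need care, and which I would write out rather than wave through, are (i) verifying that every convolution polynomial occurring — $P(\mu_1,\dots,\mu_m)$, $Q(\mu_1,\dots,\mu_m)$, the $\rho_{i,j}$, and the auxiliary $\mu_i*\mu_i$, $\mu_i*\mu_j$ — is a genuine non‑negative finite Borel measure with finite first moment, so that the convex‑order relations and the Fubini interchanges are all legitimate; and (ii) running the reduction to test functions $(x-A)_+$ for the signed measure $(Q-P)(\mu_1,\dots,\mu_m)$, which hinges on that signed measure having zero total mass and zero first moment. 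The transfer of the polynomial identity $Q-P=\sum R_{i,j}(x_i-x_j)^2$ to an identity of finite signed measures is immediate once the convolution algebra is set up.
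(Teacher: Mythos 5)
Your proposal is correct and follows essentially the same route as the paper: reduce to affine functions and the functions $(x-A)_+$, use Fubini to integrate out the non-negative measure $R_{i,j}(\mu_1,\dots,\mu_m)$, and apply the key computation of Theorem~\ref{th:necsuf} to the inner integral against $(\mu_i-\mu_j)^{*2}$. Your write-up is merely more explicit about the bookkeeping (zero total mass and zero first moment of $(Q-P)(\mu_1,\dots,\mu_m)$, integrability, the convolution-algebra setup) than the paper's version.
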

\begin{proof}
Since $R_{i,j}$ are polynomials with non-negative coefficients,
it follows that $R_{i,j}(\mu_1,\dots,\mu_m)$ are finite non-negative measures.
Let $\varphi$ be a function which is affine or of form $\varphi(x)=(x-A)_+$, where $A\in\mathbb R$.
Then $\varphi$ is integrable with respect to every polynomial of measures $\mu_1,\dots,\mu_m$ and we have
\begin{multline*}
\int\varphi(x)\ (Q-P)(\mu_1,\dots,\mu_m)(dx)=
\sum_{i\neq j}\int\varphi(x)\ (R_{i,j}(\mu_1,\dots,\mu_m)*(\mu_i-\mu_j)^{*2})(dx)=\\
\sum_{i\neq j}\int\int\varphi(u+v)\ (\mu_i-\mu_j)^{*2}(du)R_{i,j}(\mu_1,\dots,\mu_m)(dv)\geq0,
\end{multline*}
because, by Theorem \ref{th:necsuf}, the internal integral in each component of the above sum is non-negative.

Since other convex functions are limits of convex combinations with non-negative coefficients of functions considered above, we obtain $P(\mu_1,\dots,\mu_m)\lcx Q(\mu_1,\dots,\mu_m)$.
\end{proof}
\begin{remark}
Let $\mu_1,\dots,\mu_k$ be finite Borel measures on $\mathbb R$ with finite first moments.
If $\mu_1,\dots,\mu_k$ are pairwise comparable in the usual stochastic order
(for each $1\leq i,j\leq k$ we have $\mu_i\lst\mu_j$ or $\mu_i\gst\mu_j$),
then the inequalities $(F_i-F_j)*(F_i-F_j)\geq0$ are satisfied for all $i,j=1,\dots,m$.
\end{remark}

%%%%%%%%%%%%%%%%%%%%%%%%%%%%%%%%%%%%%%%%%%%%%%%%%%%%%%%%%%%%%%%%%%%%%%%%%%%%%
Before we state the next theorem, we need to present two definitions.

In the set of all the $m$-tuples $\mathbf p=(p_1,\dots,p_m)$ of non-negative integers we consider the following quasiorder. 
\begin{definition}
We say that $\mathbf q$ majorizes $\mathbf p$ (denoted by $\mathbf p\prec\mathbf q$ or $\mathbf q\succ\mathbf p$) if
\begin{enumerate}%[\upshape(i)]
\item $\sum_{l=1}^m\widehat p_l=\sum_{l=1}^m\widehat q_l$,
\item $\sum_{l=1}^k\widehat p_l\leq\sum_{l=1}^k\widehat q_l$ for $k=1,\dots,m$,
\end{enumerate}
where $\mathbf{\widehat p}=(\widehat p_1,\dots,\widehat p_m)$ and $\mathbf{\widehat q}=(\widehat q_1,\dots,\widehat q_m)$
are nonincreasing permutations of $\mathbf p$ and $\mathbf q$, respectively
($\widehat p_1\geq\dots\geq\widehat p_m$ and $\widehat q_1\geq\dots\geq\widehat q_m$).
\end{definition}
The majorization has been studied in \cite{Hardy1952}, \cite{MarshallOlkin2011}, and many other sources.

The following condition ($S$) plays an important role:
We say that a pair $\mathbf p\prec\mathbf q$ satisfies the condition ($S$), if there exist $1\leq l_1<l_2\leq m$
such that $\widehat q_{l_1}=\widehat p_{l_1}+1$, $\widehat q_{l_2}=\widehat p_{l_2}-1$ and $\widehat q_l=\widehat p_l$ for $l\notin\{l_1,l_2\}$.

In \cite{KomRaj2018} we proved the following lemma.
\begin{lemma}\label{lm:muirh}
If $\mathbf p\prec\mathbf q$, then $\mathbf{\widehat p}=\mathbf{\widehat q}$
or there exist $\mathbf p=\mathbf p^0\prec\mathbf p^1\prec\dots\prec\mathbf p^I=\mathbf q$
such that $\mathbf p^{i-1}\prec\mathbf p^i$ satisfies ($S$) for $i=1,\dots,I$.
\end{lemma}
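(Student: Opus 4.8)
The plan is to prove Lemma~\ref{lm:muirh} by induction on a suitable measure of the discrepancy between $\mathbf{\widehat p}$ and $\mathbf{\widehat q}$, mimicking the classical argument that any majorization can be reached by a finite sequence of Robin Hood transfers. Concretely, assume $\mathbf{\widehat p}\neq\mathbf{\widehat q}$. Let $k$ be the smallest index with $\widehat p_k\neq\widehat q_k$. By condition~(2) of majorization together with minimality of $k$, we must have $\widehat q_k>\widehat p_k$; and since $\sum_l\widehat p_l=\sum_l\widehat q_l$ (condition~(1)), there is some index $k'>k$ with $\widehat q_{k'}<\widehat p_{k'}$. First I would choose $l_2$ to be the \emph{largest} such index $k'$, and then choose $l_1$ appropriately among the indices $j$ with $\widehat q_j>\widehat p_j$ — taking $l_1$ to be the largest index $\le l_2$ with $\widehat q_{l_1}>\widehat p_{l_1}$ — so that the intermediate tuple we build stays nonincreasing after a single $\pm1$ transfer.

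The key construction is: define $\mathbf p^1$ from $\mathbf{\widehat p}$ by setting $p^1_{l_1}=\widehat p_{l_1}+1$, $p^1_{l_2}=\widehat p_{l_2}-1$, and $p^1_l=\widehat p_l$ otherwise. I would then verify three things. First, that $\mathbf p^1$ (after reordering, though with the careful choice of $l_1,l_2$ it is already nonincreasing) is still a tuple of non-negative integers: non-negativity of the $l_2$-entry follows from $\widehat p_{l_2}>\widehat q_{l_2}\ge 0$. Second, that $\mathbf{\widehat p}\prec\mathbf p^1$ and the pair satisfies condition~($S$): condition~($S$) holds by construction, and the majorization $\mathbf{\widehat p}\prec\mathbf p^1$ amounts to checking that partial sums only increase, which is immediate since we moved a unit from a later coordinate to an earlier one (this is where the monotonicity of the reordered $\mathbf p^1$ is needed to know the displayed entries really are the sorted ones). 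Third, and most importantly, that $\mathbf p^1\prec\mathbf q$: condition~(1) is preserved since we did a $+1/-1$ transfer, and for condition~(2) one checks partial sums $\sum_{l=1}^j$ for each $j$ — for $j<l_1$ or $j\ge l_2$ nothing changed relative to $\mathbf{\widehat p}\prec\mathbf q$, and for $l_1\le j<l_2$ the partial sum of $\mathbf p^1$ increased by exactly one, so one must show it still does not exceed that of $\mathbf{\widehat q}$. This last inequality is the crux: it follows from the choice of $l_1$ as the largest "surplus" index $\le l_2$, which forces $\sum_{l=1}^j\widehat p_l<\sum_{l=1}^j\widehat q_l$ strictly for $l_1\le j<l_2$ (strict because there is a surplus coordinate, namely $k\le l_1$, below $j$ and no deficit coordinate in the range $(l_1,l_2]$ except possibly at $l_2$ itself which is $>j$), hence the bumped sum is still $\le$.

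Having established $\mathbf{\widehat p}\prec\mathbf p^1\prec\mathbf q$ with the first link satisfying~($S$), I would then apply the induction hypothesis to the pair $\mathbf p^1\prec\mathbf q$. For the induction to terminate I need a strictly decreasing nonnegative integer potential; the natural choice is $\Phi(\mathbf a,\mathbf q)=\sum_{l=1}^m\bigl(\sum_{j=1}^l(\widehat q_j-\widehat a_j)\bigr)$, the sum of the (nonnegative) partial-sum gaps, which drops by exactly the number of indices $j$ with $l_1\le j<l_2$, i.e.\ by $l_2-l_1\ge 1$, under the transfer. When $\Phi=0$ all partial sums agree, forcing $\mathbf{\widehat p}=\mathbf{\widehat q}$, which is the base case. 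Splicing $\mathbf p^1\prec\dots\prec\mathbf q$ onto the front link $\mathbf p=\mathbf p^0\prec\mathbf p^1$ yields the desired chain. The main obstacle is purely combinatorial bookkeeping: choosing $l_1,l_2$ so simultaneously that (a) the transferred tuple remains sorted and (b) the partial-sum inequalities toward $\mathbf q$ survive the bump; everything else is a routine verification of the three conditions and monotonicity of the potential.
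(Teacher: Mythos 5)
The paper does not actually reprove this lemma (it is quoted from \cite{KomRaj2018}), so I can only assess your argument on its own terms. Your overall strategy --- reduce to a single unit transfer, verify ($S$) and the two majorizations, and induct on the potential $\Phi$ --- is the standard one and is sound, but two steps fail as written. First, the claim that with $l_2$ the largest deficit index and $l_1$ the largest surplus index $\le l_2$ the transferred tuple ``is already nonincreasing'' is false: take $\widehat{\mathbf p}=(2,2,2)$ and $\widehat{\mathbf q}=(3,3,0)$; then $l_1=2$, $l_2=3$, and the transfer produces $(2,3,1)$. This is not cosmetic, because you explicitly rely on sortedness to identify the displayed entries of $\mathbf p^1$ with $\widehat{\mathbf p^1}$ when checking both condition ($S$) and the partial-sum inequalities for $\mathbf p^1\prec\mathbf q$. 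Second, your justification of the crux inequality (``no deficit coordinate in the range $(l_1,l_2]$ except possibly at $l_2$'') is also false: for $\widehat{\mathbf p}=(1,1,1,1)$, $\widehat{\mathbf q}=(4,0,0,0)$ one has $l_1=1$, $l_2=4$, and the indices $2,3$ are deficit indices strictly between $l_1$ and $l_2$. The inequality you need is nevertheless true, but for a different reason: by maximality of $l_1$ there is no \emph{surplus} index in $(l_1,l_2)$, so the gap $G(j)=\sum_{l\le j}(\widehat q_l-\widehat p_l)$ is nonincreasing for $j\in[l_1,l_2-1]$, and $G(l_2-1)=G(l_2)+(\widehat p_{l_2}-\widehat q_{l_2})\ge 0+1$.

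Both defects are repairable. Your choice of $l_2$ does guarantee sortedness at the bottom, since $\widehat p_{l_2}>\widehat q_{l_2}\ge\widehat q_{l_2+1}\ge\widehat p_{l_2+1}$; the problem is only at the top. After re-sorting, $\widehat{\mathbf p^1}$ differs from $\widehat{\mathbf p}$ by $+1$ at $l_1'$, the smallest index with $\widehat p_{l_1'}=\widehat p_{l_1}$, and by $-1$ at $l_2$, so ($S$) still holds (with indices $(l_1',l_2)$), and the partial sums are bumped on the possibly larger range $[l_1',l_2)$. There the gap is still $\ge1$, because every index $j\in[l_1',l_1]$ is a surplus index ($\widehat q_j\ge\widehat q_{l_1}>\widehat p_{l_1}=\widehat p_j$), whence $G(j)\ge 1$ already on $[l_1',l_1)$. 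With these two corrections (and noting that $\Phi$ then drops by $l_2-l_1'\ge1$), your induction goes through.
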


The main theorem of this section concerns polynomials defined as follows.

\begin{definition}
Let $m\in\mathbb N$ and let $\Pi$ be the set of all permutations of the set $\{1,\dots,m\}$.
For every $m$-tuple $\mathbf p=(p_1,\dots,p_m)$ of non-negative integers, we define the following polynomial:
$$W^{\mathbf p}(x_1,\dots,x_m):=\tfrac1{m!}\sum_{\pi\in\Pi}\prod_{l=1}^mx_{\pi(l)}^{p_l}.$$
\end{definition}
Clearly $W^{\mathbf p}$ is a symmetric polynomial with non-negative coefficients.
If $\mathbf q$ is a permutation of $\mathbf p$, then $W^{\mathbf q}=W^{\mathbf p}$.
In particular $W^{\mathbf p}=W^{\mathbf{\widehat p}}$.

\begin{theorem}\label{th:Muir}
Let $m\in\mathbb N$ and let $\mu_1,\dots,\mu_m$ be finite Borel measures on $\mathbb R$
satisfying $\mu_1(\mathbb R)=\dots=\mu_m(\mathbb R)$ and $\int|x|\ \mu_l(dx)<\infty$
for $l=1,\dots,m$.
Let $F_i(x)=\mu_i((-\infty,x])$ for $x\in\mathbb R$.
Assume that $(F_i-F_j)*(F_i-F_j)\geq0$ for all $i$ and $j$.
If $\mathbf p\prec\mathbf q$,
then $W^{\mathbf p}(\mu_1,\dots,\mu_m)\lcx W^{\mathbf q}(\mu_1,\dots,\mu_m)$.
\end{theorem}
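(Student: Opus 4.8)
The plan is to reduce the general majorization $\mathbf p\prec\mathbf q$ to a chain of elementary steps and then apply Theorem~\ref{th:muir5} on each step. First I would note that $W^{\mathbf p}=W^{\widehat{\mathbf p}}$ for every tuple, so I may freely replace $\mathbf p$ and $\mathbf q$ by their nonincreasing rearrangements; in particular, whenever I need to produce the sum-of-squares decomposition I will work with sorted tuples. If $\widehat{\mathbf p}=\widehat{\mathbf q}$ then $W^{\mathbf p}=W^{\mathbf q}$ and there is nothing to prove. Otherwise Lemma~\ref{lm:muirh} gives a chain $\mathbf p=\mathbf p^0\prec\mathbf p^1\prec\dots\prec\mathbf p^I=\mathbf q$ in which each $\mathbf p^{i-1}\prec\mathbf p^i$ satisfies condition~($S$). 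Since $\lcx$ is transitive, it suffices to prove the theorem under the extra assumption that $\mathbf p\prec\mathbf q$ satisfies~($S$); chaining the one-step inequalities $W^{\mathbf p^{i-1}}(\mu_1,\dots,\mu_m)\lcx W^{\mathbf p^i}(\mu_1,\dots,\mu_m)$ then yields the general case.

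So assume $\mathbf p=\widehat{\mathbf p}$, $\mathbf q=\widehat{\mathbf q}$, both nonincreasing, and that there are $1\le l_1<l_2\le m$ with $q_{l_1}=p_{l_1}+1$, $q_{l_2}=p_{l_2}-1$, and $q_l=p_l$ for $l\notin\{l_1,l_2\}$. Writing $a=p_{l_1}$, $c=p_{l_2}$, the fact that $\mathbf p$ is nonincreasing and $l_1<l_2$ forces $a\ge c$, while $c\ge1$ since all entries are non-negative integers. I would then expand directly from the definition:
\[
W^{\mathbf q}-W^{\mathbf p}=\frac1{m!}\sum_{\pi\in\Pi}\Bigl(x_{\pi(l_1)}^{a+1}x_{\pi(l_2)}^{c-1}-x_{\pi(l_1)}^{a}x_{\pi(l_2)}^{c}\Bigr)\prod_{l\ne l_1,l_2}x_{\pi(l)}^{p_l}.
\]
Now pair each $\pi$ with $\pi\circ(l_1\,l_2)$. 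This is a fixed-point-free involution of $\Pi$ that leaves the factor $\prod_{l\ne l_1,l_2}x_{\pi(l)}^{p_l}$ unchanged, and a short factorization shows that the two summands of such a pair add up to
\[
\prod_{l\ne l_1,l_2}x_{\pi(l)}^{p_l}\cdot x_{\pi(l_1)}^{c-1}x_{\pi(l_2)}^{c-1}\bigl(x_{\pi(l_1)}-x_{\pi(l_2)}\bigr)\bigl(x_{\pi(l_1)}^{a-c+1}-x_{\pi(l_2)}^{a-c+1}\bigr).
\]
Using $x^{n}-y^{n}=(x-y)\sum_{k=0}^{n-1}x^ky^{n-1-k}$ with $n=a-c+1\ge1$, this equals $(x_{\pi(l_1)}-x_{\pi(l_2)})^2$ times a polynomial with non-negative coefficients. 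Collecting these paired contributions according to which two variables play the roles of $x_{\pi(l_1)},x_{\pi(l_2)}$ produces a representation $W^{\mathbf q}-W^{\mathbf p}=\sum_{i\ne j}R_{i,j}(x_1,\dots,x_m)(x_i-x_j)^2$ with each $R_{i,j}$ having non-negative coefficients. Theorem~\ref{th:muir5} then delivers $W^{\mathbf p}(\mu_1,\dots,\mu_m)\lcx W^{\mathbf q}(\mu_1,\dots,\mu_m)$, which closes the single-step case and, by the reduction above, the theorem.

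The main obstacle — really the only substantive point — is this Muirhead-type factorization for the elementary step: verifying that $W^{\mathbf q}-W^{\mathbf p}$ admits a sum-of-$(x_i-x_j)^2$ decomposition with non-negative-coefficient multipliers. The inequality $a\ge c$ is exactly what makes $\sum_{k=0}^{a-c}x_{\pi(l_1)}^{k}x_{\pi(l_2)}^{a-c-k}$ a genuine polynomial rather than a rational expression, which is why the normalization to nonincreasing tuples (so that condition~($S$) places the raised coordinate at the smaller index) is done first. The rest is bookkeeping that I would carry out carefully but expect no trouble with: that $\pi\mapsto\pi\circ(l_1\,l_2)$ partitions $\Pi$ into two-element orbits on each of which the symmetrized summand is constant (so a harmless factor $\tfrac12$, not an error, appears in $R_{i,j}$), that $\lcx$ is transitive so the chain of one-step inequalities composes, and that $(F_i-F_j)*(F_i-F_j)\ge0$ together with equal total masses is precisely the hypothesis under which Theorem~\ref{th:muir5} (hence Theorem~\ref{th:necsuf}) applies to each pair $\mu_i,\mu_j$.
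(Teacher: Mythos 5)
Your proposal is correct and follows essentially the same route as the paper's proof: reduction to a single step satisfying condition ($S$) via Lemma~\ref{lm:muirh} and transitivity of $\lcx$, pairing each permutation with its composition with the transposition $(l_1\,l_2)$, factoring the paired sum as $(x_u-x_v)^2$ times a polynomial with non-negative coefficients (your $x_u^{c-1}x_v^{c-1}\sum_{k=0}^{a-c}x_u^kx_v^{a-c-k}$ is exactly the paper's $\sum_{j=\widehat q_{l_2}}^{\widehat p_{l_1}-1}x_u^jx_v^{\widehat p_{l_1}+\widehat q_{l_2}-1-j}$ after reindexing), and concluding by Theorem~\ref{th:muir5}. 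The only cosmetic differences are that you make the normalization to nonincreasing tuples and the factor $\tfrac12$ from double-counting orbits explicit, which the paper handles by summing over ordered pairs $u<v$ with $\pi(l_1)=u$, $\pi(l_2)=v$.
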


\begin{proof}
In view of Lemma \ref{lm:muirh} and transitivity of $\lcx$, it is enough
to consider the case when the pair $\mathbf p\prec\mathbf q$ satisfies condition ($S$).
Let $l_1<l_2$ be the indices given in condition ($S$).
For every $\pi\in\Pi$ we define $\pi'\in\Pi$ by $\pi'(l_1)=\pi(l_2)$, $\pi'(l_2)=\pi(l_1)$
and $\pi'(l)=\pi(l)$ for $l\notin\{l_1,l_2\}$. We have
\begin{multline*}
(W^{\mathbf q}-W^{\mathbf p})(x_1,\dots,x_m)=
\tfrac1{m!}\sum_{\pi\in\Pi}\left(\prod_{l=1}^mx_{\pi(l)}^{\widehat q_l}-\prod_{l=1}^mx_{\pi(l)}^{\widehat p_l}\right)=\\
\tfrac1{m!}\sum_{1\leq u<v\leq m}\sum_{\substack{\pi\in\Pi\\\pi(l_1)=u\\\pi(l_2)=v}}
\left(\prod_{l=1}^mx_{\pi(l)}^{\widehat q_l}+\prod_{l=1}^mx_{\pi'(l)}^{\widehat q_l}
-\prod_{l=1}^mx_{\pi(l)}^{\widehat p_l}-\prod_{l=1}^mx_{\pi'(l)}^{\widehat p_l}\right)=\\
\tfrac1{m!}\sum_{1\leq u<v\leq m}\sum_{\substack{\pi\in\Pi\\\pi(l_1)=u\\\pi(l_2)=v}}\prod_{l\neq l_1,l_2}x_{\pi(l)}^{\widehat p_l}
\left(x_u^{\widehat p_{l_1}+1}x_v^{\widehat q_{l_2}}+x_v^{\widehat p_{l_1}+1}x_u^{\widehat q_{l_2}}
-x_u^{\widehat p_{l_1}}x_v^{\widehat q_{l_2}+1}-x_v^{\widehat p_{l_1}}x_u^{\widehat q_{l_2}+1}\right)
\end{multline*}
Note that
\begin{multline*}
x_u^{\widehat p_{l_1}+1}x_v^{\widehat q_{l_2}}+x_v^{\widehat p_{l_1}+1}x_u^{\widehat q_{l_2}}
-x_u^{\widehat p_{l_1}}x_v^{\widehat q_{l_2}+1}-x_v^{\widehat p_{l_1}}x_u^{\widehat q_{l_2}+1}=\\
(x_u-x_v)\left(x_u^{\widehat p_{l_1}}x_v^{\widehat q_{l_2}}-x_v^{\widehat p_{l_1}}x_u^{\widehat q_{l_2}}\right)=
(x_u-x_v)^2\sum_{j=\widehat q_{l_2}}^{\widehat p_{l_1}-1}x_u^jx_v^{\widehat p_{l_1}+\widehat q_{l_2}-1-j}.
\end{multline*}
It follows that
$$(W^{\mathbf q}-W^{\mathbf p})(x_1,\dots,x_m)=
\sum_{1\leq u<v\leq m}(x_u-x_v)^2\sum_{\substack{\pi\in\Pi\\\pi(l_1)=u\\\pi(l_2)=v}}
\sum_{j=\widehat q_{l_2}}^{\widehat p_{l_1}-1}\frac{x_u^jx_v^{\widehat p_{l_1}+\widehat q_{l_2}-1-j}}{m!}\prod_{l\neq l_1,l_2}x_{\pi(l)}^{\widehat p_l}.
$$
By Theorem \ref{th:muir5}, we obtain $W^{\mathbf p}(\mu_1,\dots,\mu_m)\lcx W^{\mathbf q}(\mu_1,\dots,\mu_m)$.
\end{proof}

\begin{remark}
Theorem \ref{th:Muir} is an analogue of Muirhead's inequality (see \cite{Hardy1952}, Theorem 45 or \cite{MarshallOlkin2011}, Section 3G)
with positive numbers replaced by Borel measures on $\mathbb R$, multiplication replaced by convolution, and $\leq$ replaced by $\lcx$.
Moreover, if $x_1,\dots,x_k>0$, then applying Theorem \ref{th:Muir} with $\mu_l=\delta_{\ln x_l}$ (for $l=1,\dots,k$)
and the convex function $\varphi(x)=e^x$, we obtain the classical Muirhead inequality with integer exponents:
If $\mathbf p\prec\mathbf q$ and $x_1,\dots,x_m>0$, then $W^{\mathbf p}(x_1,\dots,x_m)\leq W^{\mathbf q}(x_1,\dots,x_m)$.
\end{remark}

If we apply Theorem \ref{th:Muir} with $(p)=(1,\dots,1)$ and $(q)=(m,0,\dots,0)$, we get 
the following corollary, which generalizes
Ra\c{s}a type inequalities proved in \cite{MRW2017}, \cite{KomRaj2018} and \cite{Gav2018}.
\begin{corollary}
If $\mu_1,\dots,\mu_m$ are finite Borel measures on $\mathbb R$
satisfying assumptions of Theorem \ref{th:Muir}, then 
$$\mu_1*\dots*\mu_m\lcx\tfrac1m\left[(\mu_1)^{*m}+\dots+(\mu_m)^{*m}\right].$$
In particular
$$\sum_{i_1,\dots,i_m=0}^n\bigl(b_{n,i_1}(x_1)\cdots b_{n,i_m}(x_1)+\dots+b_{n,i_1}(x_m)\dots b_{n,i_m}(x_m)
-mb_{n,i_1}(x_1)\dots b_{n,i_m}(x_m)\bigr)\varphi\left(\tfrac{i_1+\dots+i_m}{mn}\right)\geq0,$$
in the case of $\mu_i=B(n,x_i)$ $(x_i\in[0,1])$ for $i=1,\dots,m.$
\end{corollary}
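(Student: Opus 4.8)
The plan is to obtain the first displayed inequality as the instance of Theorem~\ref{th:Muir} with $\mathbf p=(1,\dots,1)$ and $\mathbf q=(m,0,\dots,0)$, and then to rewrite that convex order relation explicitly for binomial measures.

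First I would verify that $\mathbf p=(1,\dots,1)\prec(m,0,\dots,0)=\mathbf q$. Both tuples are already nonincreasing, the sums of their entries coincide (both equal $m$), and for each $k=1,\dots,m$ one has $\sum_{l=1}^k\widehat p_l=k\le m=\sum_{l=1}^k\widehat q_l$, so both conditions of the majorization definition hold. Next I would evaluate the two polynomials $W^{\mathbf p}$ and $W^{\mathbf q}$. Since $\prod_{l=1}^m x_{\pi(l)}=x_1\cdots x_m$ for every $\pi\in\Pi$, we get $W^{(1,\dots,1)}(x_1,\dots,x_m)=x_1\cdots x_m$. For $\mathbf q=(m,0,\dots,0)$ only the factor indexed by $\pi(1)$ carries a nonzero exponent, and exactly $(m-1)!$ permutations satisfy $\pi(1)=k$ for each fixed $k$, so $W^{(m,0,\dots,0)}(x_1,\dots,x_m)=\tfrac1m\sum_{k=1}^m x_k^m$. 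Reading products of variables as convolutions of measures, Theorem~\ref{th:Muir} then gives
$$\mu_1*\dots*\mu_m=W^{\mathbf p}(\mu_1,\dots,\mu_m)\lcx W^{\mathbf q}(\mu_1,\dots,\mu_m)=\tfrac1m\bigl[(\mu_1)^{*m}+\dots+(\mu_m)^{*m}\bigr].$$

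For the binomial specialization I would set $\mu_i=B(n,x_i)$ and check the hypotheses of Theorem~\ref{th:Muir}. Each $B(n,x_i)$ is a probability measure, so $\mu_1(\mathbb R)=\dots=\mu_m(\mathbb R)=1$, and it has finite first moment $nx_i$. The family is pairwise comparable in the usual stochastic order: for $x\le y$ one has $B(n,x)\lst B(n,y)$, which follows from Theorem~\ref{th:1a1} applied to the coupling $X=\sum_{k=1}^n\mathbf 1_{\{U_k\le x\}}$, $Y=\sum_{k=1}^n\mathbf 1_{\{U_k\le y\}}$ with $U_1,\dots,U_n$ i.i.d.\ uniform on $(0,1)$. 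By the remark following Theorem~\ref{th:muir5} this yields $(F_i-F_j)*(F_i-F_j)\ge0$ for all $i,j$, so the hypotheses of Theorem~\ref{th:Muir} hold. Since the atoms of $B(n,x_i)$ are $b_{n,k}(x_i)$ at $k\in\{0,\dots,n\}$, the convolution $\mu_1*\dots*\mu_m$ assigns weight $\sum_{i_1+\dots+i_m=j}b_{n,i_1}(x_1)\cdots b_{n,i_m}(x_m)$ to each $j$, while $(\mu_r)^{*m}$ assigns weight $\sum_{i_1+\dots+i_m=j}b_{n,i_1}(x_r)\cdots b_{n,i_m}(x_r)$. Applying the convex order relation above to the convex function $t\mapsto\varphi(t/(mn))$ (convex on $\mathbb R$ since $mn>0$), expressing both sides through these atoms, and multiplying by $m$, I obtain
$$\sum_{i_1,\dots,i_m=0}^n\bigl(b_{n,i_1}(x_1)\cdots b_{n,i_m}(x_1)+\dots+b_{n,i_1}(x_m)\dots b_{n,i_m}(x_m)-mb_{n,i_1}(x_1)\dots b_{n,i_m}(x_m)\bigr)\varphi\bigl(\tfrac{i_1+\dots+i_m}{mn}\bigr)\ge0$$
for every convex $\varphi\colon\mathbb R\to\mathbb R$.

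There is no genuine obstacle here: the corollary is a direct specialization of Theorem~\ref{th:Muir}, the rest being bookkeeping — evaluating $W^{\mathbf p}$ and $W^{\mathbf q}$ at the chosen tuples and unwinding the measure-theoretic inequality into the sum over $(i_1,\dots,i_m)$, together with the harmless affine rescaling of the argument of $\varphi$ by $mn$. The only point worth recording explicitly is the verification that binomial distributions with varying success probability are pairwise comparable in the usual stochastic order, which is precisely what licenses the application of Theorem~\ref{th:Muir}.
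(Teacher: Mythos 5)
Your proposal is correct and follows exactly the route the paper intends: the paper derives this corollary by applying Theorem~\ref{th:Muir} with $\mathbf p=(1,\dots,1)$ and $\mathbf q=(m,0,\dots,0)$, which is precisely your argument, and your verifications (the majorization, the identities $W^{(1,\dots,1)}=x_1\cdots x_m$ and $W^{(m,0,\dots,0)}=\frac1m\sum_k x_k^m$, and the stochastic comparability of the $B(n,x_i)$ guaranteeing $(F_i-F_j)*(F_i-F_j)\geq0$) correctly fill in the routine details the paper leaves implicit.
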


One might expect that every polynomial inequality valid for non-negative real numbers has its counterpart
for finite Borel measures and convex orders. The following example shows that it is very far from true.
\begin{example}
Let $P(x,y)=\frac12x^3y+\frac12xy^3$ and $Q(x,y)=\frac18x^4+\frac34x^2y^2+\frac18y^4$.
The polynomials $P$ and $Q$ are symmetric and homogeneous polynomials of degree $4$.
We have $Q(x,y)-P(x,y)=\frac18(x-y)^4\geq0$ for every $x,y\in\mathbb R$.
Both $P$ and $Q$ have non-negative coefficients and $P(1,1)=Q(1,1)=1$.
It follows that $P(\mu,\nu)$ and $Q(\mu,\nu)$ are probability distributions
whenever $\mu$ and $\nu$ are probability distributions.
If the expected values (means) $\mathbb E\mu$ and $\mathbb E\nu$ are finite,
then $\mathbb E P(\mu,\nu)=2(\mathbb E\mu+\mathbb E\nu)=\mathbb E Q(\mu,\nu)$.
Despite all this regularity the inequality $P(\mu,\nu)\lcx Q(\mu,\nu)$
is not valid for $\mu=\delta_0$ and $\nu=\frac12\delta_0+\frac12\delta_1$
(note that $F-G\geq 0$, hence $(F-G)*(F-G)\geq 0$, cf.\ assumptions of Theorem 1 and Theorem \ref{th:Muir}).
Indeed, $P(\mu,\nu)=\frac5{16}\delta_0+\frac7{16}\delta_1+\frac3{16}\delta_2+\frac1{16}\delta_3$
and $Q(\mu,\nu)=\frac{41}{128}\delta_0+\frac{52}{128}\delta_1+\frac{30}{128}\delta_2+\frac4{128}\delta_3+\frac1{128}\delta_4$
and for the convex function $\varphi(x)=\max(0,x-2)$ we have $\int\varphi(x)P(\mu,\nu)(dx)=\frac1{16}>\frac6{128}=\int\varphi(x)Q(\mu,\nu)(dx)$,
hence $P(\mu,\nu)\not\lcx Q(\mu,\nu)$.
\end{example}

%%%%%%%%%%%%%%%%%%%%%%%%%%%%%%%%%%%%%%%%%%%%%%%%%%%%%%%%%%%%%%%%%%%%%%%%%%%%%%
\section{Open problems}

For $n\in\N$ 
the classical Bernstein operators $B_n:\C([0,1])\to\C([0,1])$, defined by
$$\left(B_n f\right)(x)=\sum_{i=0}^n b_{n,i}(x)f\left(\tfrac in\right)\quad\text{for } x\in[0,1],$$
with the Bernstein basic polynomials 
\[
 b_{n,i}(x)=\binom{n}{i}x^i(1-x)^{n-i} \quad \text{for } \ i=0,1,\dots,n, \  x\in[0,1],
\]
are the most prominent positive linear approximation operators (see \cite{Lorentz1953}).

The inequality \eqref{eq:mainv3} is the probabilistic version of the following
inequality involving Bernstein polynomials and convex functions, that was conjectured as an open problem by 
I.~Ra\c{s}a in~\cite{Rasa2014b}
\begin{equation}\label{MRW_eq:Rasa}
  \sum_{i,j=0}^n\bigl(b_{n,i}(x)b_{n,j}(x)+b_{n,i}(y)b_{n,j}(y)-2b_{n,i}(x)b_{n,j}(y)\bigr)f\left(\frac{i+j}{2n}\right)\ge 0
 \end{equation}
\par\smallskip
\noindent for each convex function $f\in\C\bigl([0,1]\bigr)$ and for all $x,y\in[0,1]$.

Ra\c{s}a \cite{Rasa2017b} remarked, that \eqref{MRW_eq:Rasa} is equivalent to 
\begin{equation}\label{Rasa_eq:Rasabis}
 \left(B_{2n}f\right)(x) + \left(B_{2n}f\right)(y)
\geq 2\;\sum_{i=0}^n \sum_{j=0}^n b_{n,i}(x)b_{n,j}(y)\ f\left(\frac{i+j}{2n}\right).
\end{equation}

B. Gavrea \cite{Gav2018} presented the following generalization of the problem of I.\ Ra\c{s}a \cite{Rasa2014b}.
\par\bigskip
\textbf{Problem 1. \cite{Gav2018}} Let $D=[0,1]\times [0,1]$, $g\in C(D)$ and $n\in\N$. The Bernstein operator is then defined by 
$$\left(B_{n,n}g\right)(x,y)=\sum_{i=0}^n \sum_{j=0}^n b_{n,i}(x)b_{n,j}(y)g\left(\frac{ i}{n},\frac{ j}{n}\right)\quad\text{for } \ (x,y)\in D.$$
Give a characterization of the class of of convex functions $g$ defined on $D$, satisfying
\begin{equation}\label{eq:Gavv1}
\left(B_{n,n}g\right)(x,x)+\left(B_{n,n}g\right)(y,y)-2\left(B_{n,n}g\right)(x,y)\geq 0
\end{equation}
for all $(x,y)\in D$.
%\par\bigskip
\begin{remark}[\cite{Gav2018}]
We note that, if $\varphi\in C([0,1])$ is a convex function, and
$$
g(x,y)=\varphi\left( \frac{x+y}{2} \right) \quad \text{for}  \ (x,y)\in D,
$$
then \eqref{eq:Gavv1} coincides with the Ra\c{s}a inequality \eqref{MRW_eq:Rasa}.
\end{remark}
\begin{remark}
Note that if \eqref{eq:Gavv1} is satisfied for all $(x,y)\in D$, then also
\begin{equation}\label{eq:Gavv10}
\left(B_{n,n}g\right)(x,x)+\left(B_{n,n}g\right)(y,y)-2\left(B_{n,n}g\right)(y,x)\geq 0
\end{equation}
for all $(x,y)\in D$. Adding inequalities \eqref{eq:Gavv1} and \eqref{eq:Gavv10}, we obtain
\begin{equation}\label{eq:Gavv20}
\left(B_{n,n}g\right)(x,x)+\left(B_{n,n}g\right)(y,y)-\left(B_{n,n}g\right)(x,y)-\left(B_{n,n}g\right)(y,x)\geq 0.
\end{equation}
\end{remark}
Taking into account \eqref{eq:Gavv20}, we consider a modification of Problem 1.
\par\bigskip
\textbf{Problem 1'.} Let $D=[0,1]\times [0,1]$, $g\in C(D)$ and $n\in\N$. 
Give a characterization of the class of convex functions $g$ defined on $D$, satisfying
\begin{equation}\label{eq:Gavv2}
\left(B_{n,n}g\right)(x,x)+\left(B_{n,n}g\right)(y,y)-\left(B_{n,n}g\right)(x,y)-\left(B_{n,n}g\right)(y,x)\geq 0 
\end{equation}
for all $(x,y)\in D$.
\par\medskip
\begin{remark}\label{rem:prob}
The inequality \eqref{eq:Gavv2} has the probabilistic interpretation. It is equivalent to the following inequality 
\begin{equation}\label{eq:Gavv3}
\E g\left(\frac{X}{n}, \frac{Y}{n}\right)+\E g\left(\frac{Y}{n}, \frac{X}{n}\right) \leq \E g\left(\frac{X_1}{n}, \frac{X_2}{n}\right)+\E g\left(\frac{Y_1}{n}, \frac{Y_2}{n}\right),
\end{equation}
where $(X,Y)$, $(X_1,X_2)$, $(Y_1,Y_2)$ are pairs of independent random variables such that $X,X_1,X_2\sim B(n,x) $ and $Y,Y_1,Y_2\sim B(n,y)$.
\end{remark}
\par\medskip
We use the following notation: $X\sim\mu$ means that $\mu$ is the probability distribution of a random variable $X$.

The inequality \eqref{eq:Gavv3} is not satisfied for all convex functions $g\in C(D)$. Let us take $g(x,y)=|x-y|$. Then $g$ is convex,
$g(0,0)= g(1,1)=0$ and  $g(0,1)= g(1,0)=1$. Let $X,X_1,X_2\sim B(n,0)=\delta_0$, $Y,Y_1,Y_2\sim B(n,1)=\delta_n$
be independent random variables. We obtain 
$$\E g\left(\frac{X}{n},\frac{Y}{n}\right)+ \E g\left(\frac{Y}{n},\frac{X}{n}\right)=1+1>0+0=\E g\left(\frac{X_1}{n},\frac{X_2}{n}\right)+ \E g\left(\frac{Y_1}{n},\frac{Y_2}{n}\right),$$
consequently the inequality \eqref{eq:Gavv3} is not fulfilled.
\par\bigskip
%%%%%%%%%%%%%%%%%%%%%%%%%%%%%%%%%%%%%%%%%%%%%%%%%%%%%%%%%%%

Let $g: \mathbb R^2 \to  \mathbb R$ be a convex function. We consider the Jensen gap corresponding to $g$ that is given by
\begin{equation}\label{eq:cond2}
\mathcal{J}(g;(x_1, x_2), (y_1, y_2))=\frac{g(x_1,x_2)+g(y_1,y_2)}{2}-g\left( \frac{(x_1,x_2)+(y_1,y_2)}{2} \right)
\end{equation}
\noindent for all $x_1, x_2, y_1, y_2 \in \mathbb R.$
Since $g$ is convex, 
\begin{equation*}\label{eq:cond1}
\mathcal{J}(g;(x_1, x_2), (y_1, y_2))\geq 0\quad \text{and} \quad \mathcal{J}(g;(x_1, y_2), (y_1, x_2))\geq 0
\end{equation*}
for all $x_1, x_2, y_1, y_2 \in \mathbb R$.

We will consider convex functions $g$ satisfying the inequality
\begin{equation}\label{eq:cond}
\mathcal{J}(g;(x_1, x_2), (y_1, y_2))\geq \mathcal{J}(g;(x_1, y_2), (y_1, x_2))
\end{equation}
for all $x_1, x_2, y_1, y_2 \in \mathbb R$ such that 
\begin{equation}(y_1-x_1)(y_2-x_2)>0.\end{equation}
By \eqref{eq:cond2}, we can write \eqref{eq:cond} in the form 
\begin{equation*}
\frac{g(x_1,x_2)+g(y_1,y_2)}{2}-g\left( \frac{(x_1,x_2)+(y_1,y_2)}{2} \right) \geq \frac{g(x_1,y_2)+g(y_1,x_2)}{2}-g\left( \frac{(x_1,y_2)+(y_1,x_2)}{2} \right), 
\end{equation*}
or equivalently 
\begin{equation}\label{eq:cond4}
g(x_1,x_2)+g(y_1,y_2) \geq g(x_1,y_2)+g(y_1,x_2). 
\end{equation}

The following theorem says, that if the convex function $g$ satisfies the inequality \eqref{eq:cond4}, which is equivalent to the inequality \eqref{eq:cond}, and the random variables $X$ and $Y$ (which are not necessary binomially distributed) are chosen to satisfy some sufficient condition, then
the inequality \eqref{eq:Gavv3} is satisfied (up to natural number $n$).
\begin{theorem}\label{th:cond}
Let $X$ and $Y$ be two independent random variables with finite first moments, such that
\begin{equation}\label{eq:cond5}
X\lst Y \quad \text{or} \quad Y\lst X. 
\end{equation}
Let $g: \mathbb R^2 \to  \mathbb R$ be a convex function satisfying
\begin{equation}\label{eq:cond6}
g(x_1,x_2)+g(y_1,y_2) \geq g(x_1,y_2)+g(y_1,x_2)
\end{equation}
for all $x_1, x_2, y_1, y_2 \in \mathbb R$ such that 
\begin{equation}(y_1-x_1)(y_2-x_2)>0.\end{equation}
Then
\begin{equation}\label{eq:cond7}
\E g(X_1,X_2)+\E g(Y_1,Y_2) \geq \E g(X,Y)+\E g(Y,X), 
\end{equation}
where $X_1,X_2$ and $Y_1,Y_2$ are independent random variables such that $X_1,X_2\sim X$ and $Y_1,Y_2\sim Y$.
\end{theorem}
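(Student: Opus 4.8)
The plan is to reduce \eqref{eq:cond7} to a pointwise inequality obtained from a coupling, in the spirit of the proof of Theorem~\ref{th:condition}. Since the conclusion \eqref{eq:cond7} is symmetric under interchanging $X$ and $Y$, since the hypothesis \eqref{eq:cond6} (with its side condition $(y_1-x_1)(y_2-x_2)>0$) is a property of $g$ alone, and since \eqref{eq:cond5} lets us choose the order, we may assume $X\lst Y$. Writing $\mu$ and $\nu$ for the distributions of $X$ and $Y$, these are probability distributions with finite first moments and $\mu\lst\nu$, so by Theorem~\ref{th:1a1} there exist, on a common probability space, random variables $U\sim X$ and $V\sim Y$ with $U\le V$ almost surely. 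Taking two independent copies $(U_1,V_1)$ and $(U_2,V_2)$ of $(U,V)$ gives, on a single probability space, random variables with $U_1,U_2\sim X$, $V_1,V_2\sim Y$, with $(U_1,V_1)$ independent of $(U_2,V_2)$, and with $U_i\le V_i$ a.s.\ for $i=1,2$.

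Next I would match the relevant joint laws. The pair $(U_1,U_2)$ has the law of $(X_1,X_2)$ and $(V_1,V_2)$ the law of $(Y_1,Y_2)$; moreover $U_1\sim X$ and $V_2\sim Y$ come from different copies, hence are independent, so $(U_1,V_2)$ has the law of $(X,Y)$ (here we use that $X$ and $Y$ are independent), and likewise $(V_1,U_2)$ has the law of $(Y,X)$. As a finite convex function on $\mathbb R^2$, $g$ is continuous, hence Borel, so all the compositions are genuine random variables, and \eqref{eq:cond7} is equivalent to
$$\E g(U_1,U_2)+\E g(V_1,V_2)\ge\E g(U_1,V_2)+\E g(V_1,U_2).$$

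The heart of the proof is the almost sure pointwise inequality $g(U_1,U_2)+g(V_1,V_2)\ge g(U_1,V_2)+g(V_1,U_2)$. Put $x_1=U_1\le y_1=V_1$ and $x_2=U_2\le y_2=V_2$. If $x_1<y_1$ and $x_2<y_2$, then $(y_1-x_1)(y_2-x_2)>0$ and the inequality is exactly \eqref{eq:cond6}. If $x_1=y_1$, then $g(x_1,x_2)+g(y_1,y_2)=g(x_1,x_2)+g(x_1,y_2)=g(x_1,y_2)+g(y_1,x_2)$, and symmetrically if $x_2=y_2$ both sides agree. This degenerate case is the point requiring care, since \eqref{eq:cond6} is only assumed on the open set $(y_1-x_1)(y_2-x_2)>0$; fortunately on the boundary the inequality holds with equality.

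Finally I would pass to expectations, handling integrability as follows. Choose an affine minorant $\ell(u,v)=au+bv+c\le g(u,v)$ (a subgradient of $g$ at some point, which exists because $g$ is finite and convex on all of $\mathbb R^2$) and set $h=g-\ell\ge0$. In the pointwise inequality the contributions of $\ell$ cancel on the two sides, so $h(U_1,U_2)+h(V_1,V_2)\ge h(U_1,V_2)+h(V_1,U_2)\ge0$ almost surely, and integrating non-negative functions preserves this. If $\E g(X_1,X_2)+\E g(Y_1,Y_2)=+\infty$ the assertion is trivial; otherwise the corresponding expectations of $h$ are finite, and adding back the $\ell$-terms — finite by the finiteness of the first moments and summing to the same value $(a+b)(\E X+\E Y)+2c$ on both sides — yields \eqref{eq:cond7}. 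I expect the only real obstacle to be this bookkeeping together with the degenerate case in the pointwise step; the conceptual content is just the Strassen-type coupling of Theorem~\ref{th:1a1} applied to two independent copies.
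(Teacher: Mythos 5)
Your proof is correct and follows essentially the same route as the paper: reduce to $X\lst Y$, apply Theorem~\ref{th:1a1} to obtain two independent monotone couplings, deduce the pointwise inequality $g(X_1,X_2)+g(Y_1,Y_2)\geq g(X_1,Y_2)+g(Y_1,X_2)$ from \eqref{eq:cond6}, and take expectations. The only difference is that you spell out two details the paper leaves implicit---the equality in the degenerate case $(y_1-x_1)(y_2-x_2)=0$, where \eqref{eq:cond6} is not assumed, and the integrability bookkeeping via an affine minorant---both of which you handle correctly.
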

\begin{proof}
Without loss of generality we may assume that $X\lst Y$.
By Theorem \ref{th:1a1}, there exist two independent random vectors $(X_1,Y_1)$ and $(X_2,Y_2)$ such that 
\begin{equation}\label{eq:suf3}
X_1,X_2\sim X, \quad Y_1,Y_2\sim Y, \quad P(X_1\leq Y_1)=1 \quad \text{and} \quad P(X_2\leq Y_2)=1.
\end{equation} 
By \eqref{eq:suf3} and \eqref{eq:cond6} we obtain
$$
P\left( g(X_1,X_2)+g(Y_1,Y_2) \geq  g(X_1,Y_2)+ g(Y_1,X_2) \right) =1
$$
which implies
\begin{equation}\label{eq:suf4}
\E g(X_1,X_2)+\E g(Y_1,Y_2) \geq  \E g(X_1,Y_2)+ \E g(Y_1,X_2).
\end{equation}
Since \eqref{eq:suf4} is equivalent to \eqref{eq:cond7}, the theorem is proved.
\end{proof}

Note that binomially distributed random variables $X\sim B(n,x)$ and $Y\sim B(n,y)$ satisfy the condition \eqref{eq:cond5} (see \cite{KomRaj2018}).
Therefore we obtain (from Theorem \ref{th:cond} and Remark \ref{rem:prob}) the following
sufficient condition for functions $g$ that appear in Problem 1'.
\begin{corollary}
Let $g: [0,1]^2 \to  \mathbb R$ be a convex function satisfying
\begin{equation*}\label{eq:cond8}
g(x_1,x_2)+g(y_1,y_2) \geq g(x_1,y_2)+g(y_1,x_2) 
\end{equation*}
for all $x_1, x_2, y_1, y_2 \in \mathbb R$ such that $(y_1-x_1)(y_2-x_2)>0$.
Then
\begin{equation*}\label{eq:Gavv30}
\left(B_{n,n}g\right)(x,x)+\left(B_{n,n}g\right)(y,y)-\left(B_{n,n}g\right)(x,y)-\left(B_{n,n}g\right)(y,x)\geq 0.
\end{equation*}
\end{corollary}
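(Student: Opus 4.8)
The plan is to deduce this corollary directly from Theorem~\ref{th:cond} together with the probabilistic reformulation recorded in Remark~\ref{rem:prob}. Concretely, fix $n\in\mathbb N$ and $x,y\in[0,1]$, and let $X\sim B(n,x)$ and $Y\sim B(n,y)$. These are random variables with finite first moments, and they are comparable in the usual stochastic order: $X\lst Y$ when $x\le y$ and $Y\lst X$ when $y\le x$ (this is the cited fact from \cite{KomRaj2018}, following e.g.\ from Theorem~\ref{th:1a1} by coupling via a common sequence of uniform random variables). Thus the hypothesis \eqref{eq:cond5} of Theorem~\ref{th:cond} holds automatically for binomial $X$ and $Y$.

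Next I would check that the convex function $g\colon[0,1]^2\to\mathbb R$ in the statement satisfies hypothesis \eqref{eq:cond6}. But this is literally the assumed inequality $g(x_1,x_2)+g(y_1,y_2)\ge g(x_1,y_2)+g(y_1,x_2)$ for all $x_1,x_2,y_1,y_2$ with $(y_1-x_1)(y_2-x_2)>0$; one only needs to note that $g$ extends (or that the argument of Theorem~\ref{th:cond} applies verbatim) on the square $[0,1]^2$, which is where the binomial random variables $X/n,Y/n$ take their values, so no values of $g$ outside $[0,1]^2$ are ever needed. Hence Theorem~\ref{th:cond} applies and yields
\begin{equation*}
\E g\!\left(\tfrac{X_1}{n},\tfrac{X_2}{n}\right)+\E g\!\left(\tfrac{Y_1}{n},\tfrac{Y_2}{n}\right)\ge \E g\!\left(\tfrac{X}{n},\tfrac{Y}{n}\right)+\E g\!\left(\tfrac{Y}{n},\tfrac{X}{n}\right),
\end{equation*}
where $X_1,X_2\sim B(n,x)$ and $Y_1,Y_2\sim B(n,y)$ are chosen with the appropriate independence. (Strictly, Theorem~\ref{th:cond} is stated for $g$ defined on all of $\mathbb R^2$; since only arguments in $[0,1]^2$ occur, one applies it to any convex extension of $g$, or simply observes that the proof of Theorem~\ref{th:cond}—which uses only a monotone coupling and a pointwise inequality—goes through restricted to $[0,1]^2$.)

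Finally I would translate this back into the Bernstein-operator form. By Remark~\ref{rem:prob}, the inequality \eqref{eq:Gavv3} is exactly equivalent to \eqref{eq:Gavv2}, i.e.\ to
\begin{equation*}
\left(B_{n,n}g\right)(x,x)+\left(B_{n,n}g\right)(y,y)-\left(B_{n,n}g\right)(x,y)-\left(B_{n,n}g\right)(y,x)\ge 0,
\end{equation*}
which is the claimed conclusion. I do not anticipate a serious obstacle here: the corollary is essentially a dictionary translation, and the only point requiring a word of care is the domain issue for $g$ (applying a theorem stated on $\mathbb R^2$ to a function given only on $[0,1]^2$), which is harmless because all random variables involved are supported in $[0,1]$.
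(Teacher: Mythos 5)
Your proposal is correct and follows essentially the same route as the paper: the corollary is obtained by applying Theorem~\ref{th:cond} to $X\sim B(n,x)$ and $Y\sim B(n,y)$ (which are comparable in the usual stochastic order) and translating the resulting expectation inequality into the Bernstein-operator form via Remark~\ref{rem:prob}. Your extra remark about extending $g$ from $[0,1]^2$ (or running the coupling argument directly on the square) addresses a minor domain mismatch that the paper glosses over, and it is handled correctly.
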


%%%%%%%%%%%%%%%%%%%%%%%%%%%%%%%%%%%%%%%%%%%%%%%%%%%%%%%%%%%%%%%%%%
\par\medskip

We present a new open problem, which is  a generalization of Problem 1 \cite{Gav2018}.
\par\bigskip
\textbf{Problem 3.} Let $k\in\N$, $n_i \in \mathbb{N}$ for $i=1,\ldots k$, $\sum_{i=1}^k n_i=m$, $D=[0,1]^k$ and  $g\in C(D)$. The Bernstein type operator $B_{n_1,\ldots,n_k}$ is defined by
$$\left(B_{n_1,\ldots,n_k}g\right)(x_1,\ldots,x_k)=\sum_{i_1=0}^{n_1}\ldots \sum_{i_k=0}^{n_k}b_{n_1,i_1}(x_1)\ldots b_{n_k,i_k}(x_k) g\left(\frac{ i_1}{n_1},\ldots,\frac{ i_k}{n_k}\right)$$
for  $(x_1,\ldots,x_k)\in D$.

Give a characterization of the class of convex functions $g$ defined on $D$ and satisfying
\begin{equation}\label{eq:Gavv6}
\left(B_{n_1,\ldots,n_k}g\right)(x_1,\ldots,x_k)\leq
\sum_{i=1}^k\frac{n_i}m\;\left(B_{n_1,\ldots,n_k}g\right)\left(x_i,\ldots,x_i\right)
\end{equation}
for all $(x_1,\ldots,x_k)\in D$.

\par\medskip
In \cite{KomRaj2018bis}, we proved the following generalization of the Ra\c{s}a inequality \eqref{Rasa_eq:Rasabis}.
\begin{theorem} [\cite{KomRaj2018bis}, Theorem 2.2, (2.5)]\label{th:2.1}
Let $k\in\N$, $n_i \in \mathbb{N}$ for $i=1,\ldots k$ and $\sum_{i=1}^k n_i=m$. Then
\par\medskip
\begin{equation}\label{eq:6prim}
  \sum_{i_1=0}^{n_1}\ldots \sum_{i_k=0}^{n_k}b_{n_1,i_1}(x_1)\ldots b_{n_k,i_k}(x_k) \ \varphi\left(\frac{i_1+\ldots+i_k}m\right)\leq
\sum_{i=1}^k\frac{n_i}m\;\left(B_{m}\varphi\right)\left(x_i\right)	
\end{equation}
\par\medskip
for all convex functions $\varphi\in\C([0,1])$ and $x_1, \ldots, x_k \in [0,1]$, .
\end{theorem}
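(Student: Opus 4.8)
The plan is to derive \eqref{eq:6prim} as a special case of the convex‑order Muirhead inequality of Section~3 — namely Theorem~\ref{th:Muir} applied with $\mathbf p=(1,\dots,1)\prec(m,0,\dots,0)=\mathbf q$, i.e.\ the Corollary following it — by splitting the binomial measures into Bernoulli building blocks. Writing $\beta_i=B(1,x_i)=(1-x_i)\delta_0+x_i\delta_1$ for $i=1,\dots,k$, we have $B(n_i,x_i)=\beta_i^{*n_i}$ and, since $m=n_1+\dots+n_k$, also $B(m,x_i)=\beta_i^{*m}$. I would then form the list of $m$ probability measures $\tilde\mu_1,\dots,\tilde\mu_m$ consisting of $\beta_1$ taken $n_1$ times, then $\beta_2$ taken $n_2$ times, and so on.

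With this notation $\tilde\mu_1*\dots*\tilde\mu_m=B(n_1,x_1)*\dots*B(n_k,x_k)$, and since $\beta_i$ occurs $n_i$ times in the list, $\tfrac1m(\tilde\mu_1^{*m}+\dots+\tilde\mu_m^{*m})=\sum_{i=1}^k\tfrac{n_i}{m}B(m,x_i)$. Integrating the convex function $x\mapsto\varphi(x/m)$ against these two measures reproduces exactly the two sides of \eqref{eq:6prim}: the first integral equals $\E\varphi\bigl(\tfrac1m(X_1+\dots+X_k)\bigr)=\sum_{i_1,\dots,i_k}b_{n_1,i_1}(x_1)\dots b_{n_k,i_k}(x_k)\varphi\bigl(\tfrac{i_1+\dots+i_k}{m}\bigr)$ for independent $X_i\sim B(n_i,x_i)$, while the second equals $\sum_{i=1}^k\tfrac{n_i}{m}\E\varphi\bigl(\tfrac{T_i}{m}\bigr)=\sum_{i=1}^k\tfrac{n_i}{m}(B_m\varphi)(x_i)$ for $T_i\sim B(m,x_i)$. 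So \eqref{eq:6prim} is precisely $\tilde\mu_1*\dots*\tilde\mu_m\lcx\tfrac1m(\tilde\mu_1^{*m}+\dots+\tilde\mu_m^{*m})$ evaluated at $\varphi(\cdot/m)$.

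It then remains to verify the hypotheses of Theorem~\ref{th:Muir} for $\tilde\mu_1,\dots,\tilde\mu_m$. They are probability measures, so $\tilde\mu_1(\R)=\dots=\tilde\mu_m(\R)=1$, and each is supported on $\{0,1\}$, hence has finite first moment. For the condition $(F_i-F_j)*(F_i-F_j)\ge0$ I would note that $\tilde\mu_i$ and $\tilde\mu_j$ are two Bernoulli measures $B(1,x_a)$, $B(1,x_b)$, and Bernoulli measures are totally ordered in $\lst$ (if $x_a\le x_b$ then $F_{B(1,x_a)}\ge F_{B(1,x_b)}$ pointwise); pairwise $\lst$‑comparability yields $(F_i-F_j)*(F_i-F_j)\ge0$ by the Remark following Theorem~\ref{th:Muir} (or directly: $F_i-F_j$ has constant sign, so its self‑convolution is nonnegative). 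Hence the Corollary to Theorem~\ref{th:Muir} gives the required convex‑order relation, and testing it against $x\mapsto\varphi(x/m)$ gives \eqref{eq:6prim}.

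The step needing most care is the bookkeeping identifying the two sides of \eqref{eq:6prim} with integrals against $\tilde\mu_1*\dots*\tilde\mu_m$ and $\tfrac1m\sum_j\tilde\mu_j^{*m}$. There is also one small technical point: $\varphi$ is given only on $[0,1]$, whereas Theorem~\ref{th:Muir} uses convex functions on $\R$; this is harmless because all measures in play are supported on the finite grid $\{0,1,\dots,m\}$, so one may first replace $\varphi$ by its convex piecewise‑linear interpolant at $0,\tfrac1m,\dots,1$ and then extend it affinely beyond $[0,1]$, obtaining a convex function on $\R$ that agrees with $\varphi$ on the grid and therefore leaves all the integrals unchanged.
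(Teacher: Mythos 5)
Your argument is correct, and it is worth noting that the paper itself contains no proof of Theorem~\ref{th:2.1}: the statement is imported from \cite{KomRaj2018bis}, so there is no internal proof to compare against. The closest result actually proved here is the Corollary to Theorem~\ref{th:Muir}, which treats only the special case of $m$ binomial measures $B(n,x_i)$ with a common parameter $n$. Your route --- factoring each $B(n_i,x_i)$ as the $n_i$-fold convolution of the Bernoulli measure $\beta_i=B(1,x_i)$ and applying that same Corollary to the resulting list of $m$ Bernoulli measures taken with multiplicities $n_1,\dots,n_k$ --- is a clean, self-contained derivation entirely within the paper's framework, and the finer choice of building blocks is exactly what buys the general exponents $n_1,\dots,n_k$. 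The hypothesis check is right: Bernoulli laws are totally ordered in $\lst$ (for $x_a\le x_b$ one has $F_{B(1,x_a)}\ge F_{B(1,x_b)}$ pointwise), so each difference $F_i-F_j$ has constant sign on $\R$ and its self-convolution is nonnegative, which is what Theorem~\ref{th:Muir} requires. The bookkeeping identifying the two sides of \eqref{eq:6prim} with the integrals of $x\mapsto\varphi(x/m)$ against $\tilde\mu_1*\dots*\tilde\mu_m=B(n_1,x_1)*\dots*B(n_k,x_k)$ and against $\tfrac1m\sum_j\tilde\mu_j^{*m}=\sum_i\tfrac{n_i}{m}B(m,x_i)$ is correct, and your fix for the domain issue (replace $\varphi$ by its piecewise-linear interpolant at $0,\tfrac1m,\dots,1$, which is convex because the secant slopes of a convex function are nondecreasing, extend it affinely to $\R$, and observe that all measures involved are supported on that grid) closes the only technical gap.
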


\begin{remark}
We note that, if $\varphi\in C([0,1])$ is a convex function, and
$$
g(x_1,\ldots,x_k)=\varphi\left( \frac{n_1}{m}x_1+ \ldots+ \frac{n_k}{m}x_k\right) \quad \text{for}\  (x_1,\ldots,x_k)\in D,
$$
then the inequality \eqref{eq:Gavv6} coincides with \eqref{eq:6prim}, which was proved in \cite{KomRaj2018bis}.
\end{remark}
\begin{remark}
If the inequality \eqref{eq:Gavv6} is satisfied for all $(x_1,\ldots,x_k)\in D$, then  
\begin{multline}\label{rm:gav13}
\left(B_{n_1,\ldots,n_k}g\right)(x_1,\ldots,x_k)+\left(B_{n_1,\ldots,n_k}g\right)(x_2,\ldots,x_k,x_1)+\ldots +\left(B_{n_1,\ldots,n_k}g\right)(x_k,x_1,\ldots,x_{k-1}) \\ 
\leq 
\sum_{i=1}^k\;\left(B_{n_1,\ldots,n_k}g\right)\left(x_i,\ldots,x_i\right)
\end{multline}
for all $(x_1,\ldots,x_k)\in D.$
\end{remark}
%\par\bigskip

\textbf{Problem 3'.} With assumptions such as in Problem 3., give a characterization of the class of convex functions $g$ defined on $D$ and satisfying \eqref{rm:gav13} for all $(x_1,\ldots,x_k)\in D$.
\par\medskip
%%%%%%%%%%%%%%%%%%%%%%%%%%%%%%%%%%%%%%%%%%%%%%%%%%%%%%%%%%%

B. Gavrea \cite{Gav2018} presented also the following open problem.
\par\medskip
\textbf{Problem 4. (\cite{Gav2018}, Problem 2.)}
If $a_{n,k}(x)=\binom{n+k}{k}\:(1-x)^{n+1}x^k$ and if $\varphi$ is a convex continuous function on $[0,1]$, prove or disprove the following inequality:
\begin{equation}\label{eq:Gavtres}
 \sum_{i=0}^{\infty} \sum_{j=0}^{\infty} \left(a_{n,i}(x) \:a_{n,j}(x)+a_{n,i}(y) \:a_{n,j}(y)-2a_{n,i}(x) \:a_{n,j}(y)\right)\varphi\left(\frac{i+j }{2n+i+j}\right)\geq0. 
\end{equation}
\par\medskip

We will show that \eqref{eq:Gavtres} is not valid (in general).
Let $\varphi(u)=u$ and let $x\neq y$.
Let $\mu$ be the negative binomial probability distribution with parameters $n+1$ and $x$.
Then $\mu$ is concentrated on the set of non-negative integers and it satisfies
$\mu(\{k\})=\binom{n+k}{k}\:(1-x)^{n+1}x^k=a_{n,k}(x)$ for $k=0,1,\dots$.
Similarly, let $\nu$ be the negative binomial probability distribution with parameters $n+1$ and $y$.
Let $F$ and $G$ be the cumulative distribution functions of $\mu$ and $\nu$, respectively.
If $x<y$, then $F(u)<G(u)$ for $u\geq0$. If $x>y$, then $F(u)>G(u)$ for $u\geq0$ (see proof of Lemma~2.5.c in~\cite{KomRaj2018}).
In both cases we have $F(u)=G(u)=0$ for $u<0$, thus $(F-G)*(F-G)(u)>0$ for $u>0$.
Observe that for every $u\geq0$ we have
$$\tfrac u{2n+u}=\tfrac u{2n}-\int_0^\infty\tfrac{4n}{(2n+y)^3}(u-y)_+\lambda(dy).$$
Consequently,
\begin{multline*}
\sum_{i=0}^\infty\sum_{j=0}^\infty\left(a_{n,i}(x) \:a_{n,j}(x)+a_{n,i}(y) \:a_{n,j}(y)-2a_{n,i}(x) \:a_{n,j}(y)\right)\tfrac{i+j}{2n+i+j}=
\int_{-\infty}^\infty\tfrac u{2n+u}(\mu*\mu+\nu*\nu-2\mu*\nu)(du)=\\
\int_{-\infty}^\infty\tfrac u{2n}(\mu*\mu+\nu*\nu-2\mu*\nu)(du)
-\int_{-\infty}^\infty\int_0^\infty\tfrac{4n}{(2n+y)^3}(u-y)_+\lambda(dy)(\mu*\mu+\nu*\nu-2\mu*\nu)(du)=\\
0-\int_0^\infty\tfrac{4n}{(2n+y)^3}\int_{-\infty}^\infty(u-y)_+(\mu*\mu+\nu*\nu-2\mu*\nu)(du)\lambda(dy)=\\
-\int_0^\infty\tfrac{4n}{(2n+y)^3}((F-G)*(F-G))(y)\lambda(dy)<0.
\end{multline*}
It follows that the inequality \eqref{eq:Gavtres} is not valid for $\varphi(u)=u$.
However, if $u\mapsto \varphi\left(\frac{u}{n+u}\right)$ is convex on $[0,\infty)$
(e.g. if $\varphi$ is convex and decreasing),
then inequality \eqref{eq:Gavtres} is valid (cf. Theorem~\ref{th:necsuf} above and Theorem~2.6.c in~\cite{KomRaj2018}).
Using the same method it can be shown that if $u\mapsto \varphi\left(\frac{u}{n+u}\right)$ is concave on $[0,\infty)$ but it is not linear
(e.g. if $\varphi$ is concave and strictly increasing), then \eqref{eq:Gavtres} is not valid.

%%%%%%%%%%%%%%%%%%%%%%%%%%%%%%%%%%%%%%%%%%%%%%%%%%%%%%%%%%%%%%%%%

%% The Appendices part is started with the command \appendix;
%% appendix sections are then done as normal sections
%% \appendix

%% \section{}
%% \label{}

%% References
%%
%% Following citation commands can be used in the body text:
%% Usage of \cite is as follows:
%%   \cite{key}         ==>>  [#]
%%   \cite[chap. 2]{key} ==>> [#, chap. 2]
%%

%% References with bibTeX database:

\bibliographystyle{elsarticle-num}
%\bibliography{<your-bib-database>}

%% Authors are advised to submit their bibtex database files. They are
%% requested to list a bibtex style file in the manuscript if they do
%% not want to use elsarticle-num.bst.

%% References without bibTeX database:

\end{document}